
\documentclass[10pt,b5paper,twoside, headrule]{amsart}

\usepackage{amsfonts, amsmath, amssymb,latexsym}
\usepackage{epsfig}
\usepackage[curve]{xy}
\usepackage{algorithmic}
\usepackage{algorithm}
\usepackage{enumerate}
\usepackage{framed}
\usepackage{hyperref}
\usepackage{mathtools}
\usepackage{lmodern}
\usepackage[T1]{fontenc}

\usepackage{chngcntr}


\headsep=1truecm \headheight=0pt \topmargin=0pt \oddsidemargin=40pt
\evensidemargin=25pt \textwidth=13.5truecm \textheight=19.5truecm

\footskip=10mm\parskip 0.2cm\addtocounter{page}{0}
\setlength{\arraycolsep}{1pt}

\newtheorem{thm}{Theorem}[section]

\newtheorem{prop}[thm]{Proposition}

\theoremstyle{definition}
\newtheorem{defn}[thm]{Definition}

\theoremstyle{remark}
\newtheorem{rem}[thm]{Remark}
\numberwithin{equation}{section}

\newcommand{\Z}{{\mathbb Z}}


\title[Supercuspidal lifts]{Modular supercuspidal lifts of weight $2$}

\author{\sc Iv\'an Blanco-Chac\'on}
\address{Department of Physics and Mathematics\\
School of Science\\
University of Alcal\'a\\
Madrid\\
Spain}
\email{ivan.blancoc@uah.es}
\thanks{I. Blanco-Chac\'on is partially supported by MTM2016-79400-P, by PID2022-136944NB-I00 and by Research Council of Finland (project \#351271, PI Camilla Hollanti). L. Dieulefait is partially supported by PID2022-136944NB-I00. This work was done in part while I. Blanco-Chacón was a visiting professor at the ANTA group at the Department of Mathematics and Systems Analysis, Aalto University, Finland. The department and the Aalto Science Institute are gratefully acknowledged for their support.  }

\author{\sc Luis Dieulefait}
\address{Department of Mathematics and Computer Science\\
Faculty of Mathematics and Computer Science\\
University of Barcelona\\
Barcelona\\
Spain}
\email{ldieulefait@ub.edu}

\begin{document}
\renewcommand\baselinestretch{1.2}
\renewcommand{\arraystretch}{1}
\def\base{\baselineskip}
\font\tenhtxt=eufm10 scaled \magstep0 \font\tenBbb=msbm10 scaled
\magstep0 \font\tenrm=cmr10 scaled \magstep0 \font\tenbf=cmb10
scaled \magstep0


\def\evenhead{{\protect\centerline{\textsl{\large{I. Blanco}}}\hfill}}

\def\oddhead{{\protect\centerline{\textsl{\large{On the non vanishing of the cyclotomic $p$-adic $L$-functions}}}\hfill}}

\pagestyle{myheadings} \markboth{\evenhead}{\oddhead}

\thispagestyle{empty}

\maketitle

\begin{abstract}Let $F/\mathbb{Q}$ be any totally real number field and $\frak{N}$ an ideal of its ring of integers of norm $N$ and define, for every even $n$, the $[F:\mathbb{Q}]$-dimensional multiweight $\textbf{n}=(n,...,n)$. We prove that for a non CM Hilbert cuspidal Hecke eigenform for $F$, say $f\in S_{\textbf{k}}(\Gamma_0(\frak{N}))$ with $k>2$ even, and a prime $p>\max\{k+1,6\}$ totally split in $F$ such that $p\nmid N$ and such that the residual mod $p$ representation $\overline{\rho}_f$ satisfies that $\mathrm{SL}_2(\mathbb{F}_p)\subseteq \mathrm{Im}(\overline{\rho}_f)$, there exists a  lift $\rho_g$ associated to a Hilbert modular cuspform for $F$, say $g\in S_{\textbf{2}}(\frak{N}p^2,\epsilon)$ for some Nebentypus character $\epsilon$ which is supercuspidal at each prime of $F$ over $p$. We also observe that our techniques provide an alternative proof to the corresponding statement for classical Hecke cuspforms  already proved by Khare \cite{khare} with classical techniques. Finally, we take the opportunity to include a corrigenda for \cite{dieulefait} which follows from our main result, which provides a congruence that puts the micro good dihedral prime in the level.
\end{abstract}

\section{Introduction}

The theory of congruences between modular forms dates back to the seventies with the seminal works \cite{serrecongruences} and \cite{sdcongruences}. Together with the case where the congruences involve changes in the weights, several important results have addressed the problem of adding or removing primes to/from the level: the so-called level lowering/raising results, the most notorious ones being those of Ribet (\cite{ribet1} and \cite{ribet2}), the first one providing a proof of Serre's epsilon conjecture, one of the key inputs to the proof of Fermat's Last Theorem. However, both results are restricted to the case of Steinberg primes.

With the change of century, the approach to establish this kind of congruence results switched to the construction of deformations of Galois representations with prescribed local types, together with the application of modularity lifting theorems \`a la Wiles. This approach was pioneered by Gee in \cite{geeautomorphic}. More recently, some results of weight change have been obtained by these techniques in our previous work \cite{blancodieulefait}.

The present work subscribes to this methodology. In particular, given a totally real field $F$ and an ideal $\frak{N}$ of its ring of integers of norm $N$, we start with a Hilbert modular cuspform for $F$, say $f\in S_{\textbf{k}}(\frak{N})$ with $\textbf{k}=(k,k,..,k)$ a parallel weight such that $k>2$ is even and with trivial character. Assume that $f$ is not a theta series, i.e., it has no CM. Denote by $K_f$ the field obtained by adjoining to $\mathbb{Q}$ the Hecke eigenvaloues of $f$ and by $\mathcal{O}_f$ its ring of integers. Take a prime $p>\max\{k+1,6\}$ with $p\nmid N$ totally split in $F$, choose a prime ideal $\frak{P}$ of $\mathcal{O}_f$ over $p$ and consider the representation attached to $f$ by Carayol, Blasius, Rogawski and others: $\rho_{f,\frak{P}}:G_F\to\mathrm{GL}_2(\mathcal{O}_{f,\frak{P}})$.

By \cite[Prop. 5.2]{dimitrov}, for all but finitely many primes $p$, there exists $\frak{P}$ in $\mathcal{O}_{f}$ such that $\mathrm{SL}_2(\mathbb{F}_q)\subseteq \mathrm{Im}(\overline{\rho}_{f,\frak{P}})$ for some power $q$ of $p$, so we can take $p$ as above so that $\mathrm{Im}(\overline{\rho}_{f,\frak{P}})$ is large. The main result of the present work is:

\begin{thm}Let $F$ be a totally real number field, $k>2$ even and $p>\max\{k+1,6\}$ totally split in $F$. Let $f\in S_{\textbf{k}}(\frak{N})$ be such that $p\nmid N$ as above. Suppose that $\mathrm{SL}_2(\mathbb{F}_p)\subseteq\mathrm{Im}(\overline{\rho}_{f,\frak{P}})$. Then, there exists a modular lift of $\overline{\rho}_{f,\frak{P}}$, say $\rho_{g,\frak{P}'}:G_F\to \mathrm{GL}_2(\mathcal{O}_{g,\frak{P}'})$, with $\frak{P}'$ a prime of $\mathcal{O}_g$ over $p$, such that $g\in S_{\textbf{2}}(\frak{N}p^2,\epsilon)$ for some Nebentypus character $\epsilon$ and such that $\rho_{g,\frak{P}'}$ is $\frak{p}$-supercuspidal for every prime $\frak{p}$ in $F$ over $p$.
\label{thm1}
\end{thm}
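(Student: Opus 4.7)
The plan is to produce the desired form $g$ via a Galois-theoretic route: we construct a characteristic-zero lift $\rho: G_F\to \mathrm{GL}_2(\mathcal{O})$ of $\overline{\rho}_{f,\mathfrak{P}}$ that is supercuspidal of Hodge--Tate weights $(0,1)$ at every prime $\mathfrak{p}\mid p$ and whose local type agrees with that of $\rho_{f,\mathfrak{P}}$ at primes away from $p$, and then appeal to a modularity lifting theorem for Hilbert modular forms to recognize $\rho$ as $\rho_{g,\mathfrak{P}'}$. The input hypotheses are tailored for this: $p>\max\{k+1,6\}$ places us within the Fontaine--Laffaille range and rules out small-image pathologies, $p$ totally split reduces the local analysis at each $\mathfrak{p}\mid p$ to the case $F_{\mathfrak{p}}\cong\Q_p$, and $\mathrm{SL}_2(\F_p)\subseteq\mathrm{Im}(\overline{\rho}_{f,\mathfrak{P}})$ is precisely the big-image condition required by Taylor--Wiles patching.

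The local step, performed at each $\mathfrak{p}\mid p$ separately, is to exhibit a supercuspidal lift of $\overline{\rho}_{f,\mathfrak{P}}|_{G_{F_{\mathfrak{p}}}}$ of Hodge--Tate weights $(0,1)$. One fixes a tame quadratic extension $E_{\mathfrak{p}}/F_{\mathfrak{p}}$ and a character $\overline{\chi}:G_{E_{\mathfrak{p}}}\to \overline{\F}_p^\times$ such that $\mathrm{Ind}_{G_{E_{\mathfrak{p}}}}^{G_{F_{\mathfrak{p}}}}\overline{\chi}\simeq \overline{\rho}_{f,\mathfrak{P}}|_{G_{F_{\mathfrak{p}}}}$ (using the large residual image and the explicit shape of the mod-$p$ restriction to decomposition) and lifts $\overline{\chi}$ to a characteristic-zero character whose induction is de Rham of the required Hodge--Tate weights. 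The resulting supercuspidal type has Artin conductor $\mathfrak{p}^2$, accounting for the $p^2$ in the level of $g$, while the determinant of the induction produces the Nebentypus $\epsilon$.

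Globally, one sets up a deformation problem for $\overline{\rho}_{f,\mathfrak{P}}$ with the prescribed supercuspidal type at each $\mathfrak{p}\mid p$ and with the type of $\rho_{f,\mathfrak{P}}$ at every other finite place, proving existence of a characteristic-zero lift either by a Khare--Wintenberger-style construction (using an auxiliary dihedral congruent form as a seed for the chosen local type) or directly from positive dimension of the framed global deformation ring. One then applies a Gee--Kisin modularity lifting theorem in the potentially Barsotti--Tate setting for Hilbert modular forms to the resulting compatible system, producing $g\in S_{\textbf{2}}(\mathfrak{N}p^2,\epsilon)$. The main obstacle is the modularity lifting input: one must check that the supercuspidal local deformation rings at primes above $p$ are well-behaved (ideally formally smooth of the expected dimension), that the residual representation is modular of the target Serre weight $\textbf{2}$ at every $\mathfrak{p}\mid p$ (a weight-shift statement in the style of the Buzzard--Diamond--Jarvis conjecture, accessible since $\overline{\rho}_{f,\mathfrak{P}}|_{G_{F_{\mathfrak{p}}}}$ arises by induction from a tame character), and that the Taylor--Wiles hypotheses hold with the big image assumption as the main guarantor.
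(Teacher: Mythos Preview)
Your overall architecture---produce supercuspidal local lifts of Hodge--Tate weights $(0,1)$ at each $\mathfrak{p}\mid p$, assemble them into a global deformation problem, and invoke modularity lifting---is the paper's. The concrete gap is in your local step. You assert that $\overline{\rho}_{f,\mathfrak{P}}|_{G_{F_{\mathfrak{p}}}}\simeq\mathrm{Ind}_{G_{E_{\mathfrak{p}}}}^{G_{F_{\mathfrak{p}}}}\overline{\chi}$ for some mod-$p$ character $\overline{\chi}$ and then lift $\overline{\chi}$; but in the ordinary case, where $\overline{\rho}_{f,\mathfrak{P}}|_{I_{\mathfrak{p}}}\cong\left(\begin{smallmatrix}\omega^{k-1}&\ast\\0&1\end{smallmatrix}\right)$ with $\ast\neq 0$, the local residual representation is a non-split extension, whereas an induction from an index-two subgroup is always either irreducible or a direct sum of two characters---never a non-trivial extension. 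So no such $\overline{\chi}$ exists and your construction breaks down precisely here. The paper sidesteps this by not building the local lift by hand: it fixes the supercuspidal inertial type $\tau=\tilde{\omega}_2^m\oplus\tilde{\omega}_2^{pm}$ with $m=k+(p+1)(p-2)$ and invokes Savitt's Breuil--M\'ezard multiplicity computation (Theorem~\ref{savitt}) to get $\mu_{gal}(2,\tau,\overline{\rho})=1$ in the ordinary non-split and non-ordinary cases, together with a versal-ring comparison due to Hu--Tan for the ordinary split case, concluding that the type-$(2,\tau)$ local (framed) deformation ring is nonzero (Proposition~\ref{firstlift}) without ever exhibiting the lift explicitly.

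On the global side your sketch diverges from the paper and is vague at the decisive technical point. The paper does not apply a potentially Barsotti--Tate modularity lifting theorem directly over $F$, nor does it invoke Buzzard--Diamond--Jarvis weight statements. Instead it (i) proves $\dim\mathcal{R}_{global}\geq 1$ by a presentation/generator count \`a la Kisin (Theorem~\ref{cotadimension}); (ii) base-changes to an imaginary CM quadratic extension $L/F$ and feeds $\overline{\rho}_{f,\mathfrak{P}}|_{G_L}$ into the Barnet-Lamb--Gee--Geraghty--Taylor theorem (Theorem~\ref{geetool}), using that $\rho_{f,\mathfrak{P}}$ is Fontaine--Laffaille and hence potentially diagonalizably automorphic as the required seed, to conclude that $\mathcal{R}_{global,L}$, and therefore $\mathcal{R}_{global}$, is a finite $\mathcal{O}$-module; and (iii) combines (i) and (ii) to extract a $\overline{\Z}_p$-point of $\mathcal{R}_{global}$, whose automorphy over $L$ follows from BLGGT and is then descended to $F$ by solvable base change. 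Your suggested ``auxiliary dihedral congruent form as a seed'' is not what is used; the automorphic input is $f$ itself, transported to $L$.
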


In the particular case in which $F=\mathbb{Q}$, starting with $h\in S_k(\Gamma_0(N))$, our result provides an alternative proof to the following statement, which was already proved in \cite[Thm. 6]{khare} by using more classical tools:

\begin{thm}Let $h\in S_k(\Gamma_0(N))$ be a newform  without CM and let $p$ be a prime such that $p>\max\{k+1,6\}$ and $p\nmid N$. Suppose that a residual mod $p$  representation $\overline{\rho}_{h,\frak{P}}$ associated to $h$ satisfies $\mathrm{SL}_2(\mathbb{F}_p)\subseteq \mathrm{Im}(\overline{\rho}_{h,\frak{P}})$ . Then, there exists a modular lift of $\overline{\rho}_{h,\frak{P}}$, say $\rho_{g,\frak{P}'}:G_{\mathbb{Q}}\to \mathrm{GL}_2(\mathcal{O}_{g,\frak{P}'})$, with $\frak{P}'$ a prime over $p$ in $\mathcal{O}_g$, such that $g\in S_2(Np^2,\epsilon)$ for some Nebentypus character $\epsilon$ and such that $\rho_{g,\frak{P}'}$ is $p$-supercuspidal. In particular, there exist infinitely many rational primes $p$ such that $\overline{\rho}_{h,\frak{P}}$ admits a $p$-supercuspidal modular lift of  weight $2$.
\label{thm2}
\end{thm}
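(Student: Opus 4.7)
The plan is to derive Theorem \ref{thm2} essentially as a specialization of Theorem \ref{thm1} to the case $F=\mathbb{Q}$. Since $\mathbb{Q}$ is trivially a totally real number field of degree $1$, a classical newform $h\in S_k(\Gamma_0(N))$ can be regarded as a Hilbert cuspidal Hecke eigenform of trivial character with parallel multiweight $\mathbf{k}=(k)$, and the ideal $\mathfrak{N}$ in Theorem \ref{thm1} becomes the principal ideal $(N)\subset\mathbb{Z}$. The requirement that $p$ be totally split in $F$ is vacuous when $F=\mathbb{Q}$, and the remaining hypotheses $p>\max\{k+1,6\}$, $p\nmid N$ and $\mathrm{SL}_2(\mathbb{F}_p)\subseteq\mathrm{Im}(\overline{\rho}_{h,\mathfrak{P}})$ are verbatim those of Theorem \ref{thm1}. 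The non-CM assumption transports without change, as does the identification of the classical $p$-adic Galois representation attached to $h$ with the one produced by the Carayol--Blasius--Rogawski construction in the degree one case.

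Applying Theorem \ref{thm1} to $h$ therefore produces a modular lift $\rho_{g,\mathfrak{P}'}$ of $\overline{\rho}_{h,\mathfrak{P}}$ coming from a Hilbert cuspform $g\in S_{\mathbf{2}}(Np^2,\epsilon)$ for some Nebentypus $\epsilon$, supercuspidal at every prime of $\mathbb{Q}$ above $p$. Since $p$ is the unique such prime, this is exactly a $p$-supercuspidal weight $2$ lift, and $g$ identifies with a classical cuspform in $S_2(Np^2,\epsilon)$, yielding the first assertion of the theorem.

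For the final statement, I would appeal to the well-known large image result for non-CM classical newforms (originally due to Ribet, and recovered as a particular case of Dimitrov's Proposition 5.2 cited in the introduction): for all but finitely many primes $p$, there exists a prime $\mathfrak{P}$ of $\mathcal{O}_h$ above $p$ such that $\mathrm{SL}_2(\mathbb{F}_p)\subseteq\mathrm{Im}(\overline{\rho}_{h,\mathfrak{P}})$. Combining this with the preceding paragraph immediately gives infinitely many rational primes $p$ for which $\overline{\rho}_{h,\mathfrak{P}}$ admits a $p$-supercuspidal modular lift of weight $2$.

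The only genuine mathematical content lies inside Theorem \ref{thm1}, so I do not expect any essential obstacle here; the main bookkeeping step is to check that supercuspidality at the unique prime of $\mathbb{Q}$ above $p$ matches the classical $p$-supercuspidality condition on the associated automorphic representation, and that the weight $\mathbf{2}$ condition from the Hilbert setting reduces to the familiar classical weight $2$.
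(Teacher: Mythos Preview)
Your proposal is correct and matches the paper's own treatment: the paper proves Theorem~\ref{thm1} in full and then simply remarks that Theorem~\ref{thm2} is the particular case $F=\mathbb{Q}$, with the ``infinitely many primes'' clause following from the large-image result of Dimitrov (Ribet in the classical case) exactly as you indicate.
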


In our proof we will specify the order of the character corresponding to the local type of the supercuspidal representation (see Proposition \ref{firstlift}), with a view to use it in the precise problem we will describe below. 

Our present work as well as \cite{blancodieulefait} introduce a technique which is useful in the resolution of some cases of Langlands functoriality, as in \cite{dieulefaitannals}, where the second author introduces the so-called safe chains of modular Galois representations. The concept of safe chain is context dependent: once a particular case of functoriality is considered, what is required is that the
chain obtained after applying to the given chain the group theoretical operation being considered has the property that
each congruence in it is amenable to the application of a suitable automorphy lifting theorem.

Another such chain is constructed in \cite{dieulefait}, where one of the main tools introduced is the concept and properties of so-called micro good dihedral primes, which are basically small primes that are added to the level through suitable congruences such that the resulting modular form is supercuspidal at these primes.

We will apply  Thm. \ref{thm2} to correct an inaccuracy that has been observed in the procedure used in \cite{dieulefait} to introduce such a micro good dihedral prime.  In the last section of this paper we explain how Thm. \ref{thm2} can be used to give a single congruence that allows to introduce such a micro good dihedral prime, thus serving as a corrigenda for \cite{dieulefait}.

We recall that the two main results of \cite{dieulefait} are on one hand, the automorphy of the $\mathrm{Sym}^5$ of level $1$ modular forms, a result that has been superseeded by several results of Clozel-Thorne (\cite{ct1}, \cite{ct2} and \cite{ct3}) as well as by Newton-Thorne (\cite{nt}), and on the other hand, the base change for arbitrary classical modular forms.

Furthermore, we mention that  the same chain that we are repairing here for \cite{dieulefait} is also used to deduce other cases of Langlands functoriality, for instance in \cite{GL2GL2GL2} and \cite{luissara} new cases of automorphy of tensor products of modular or automorphic representations are established. 

Our work is organised as follows:

In Section 2 we introduce the standard notations and facts required for our proof. In particular, we recall the concepts and basic facts of universal, frame-universal and versal deformation rings, fundamental characters of level 2, Hilbert-Samuel multiplicity and the precise meaning of $p$-supercuspidality. We do not provide the details and proofs of the claims we state but we refer instead to the relevant literature where such properties are proved in detail.

In Section 3, we recall several facts about inertial types, focusing in those which correspond to $p$-supercuspidal representations. In particular, we use a result by Savitt (Prop. \ref{savitt}) which allows us to prove Prop. \ref{firstlift} thus producing local supercuspidal lifts  which we integrate in a finite family of local lifts (archimedean, potentially semistable and inertia rigid) to construct a global deformation ring $\mathcal{R}_{global}$ for which we prove that it has positive dimension. For this proof, we follow a similar argument to that we introduced in \cite{blancodieulefait}. After that, appealing to a result of \cite{gee}, we conclude the existence of a point $\mathcal{R}_{global}\to\bar{\Z}_p\to 0$ which corresponds with a modular lift of parallel weight $2$ of $\overline{\rho_{f}}$ which is $\frak{p}$-supercuspidal at each prime $\frak{p}$ of $F$ over $p$, concluding hence the proof of Thm. \ref{thm1} and of Thm. \ref{thm2}.

Finally, in Section 4 we use Thm. \ref{thm2} and Prop. \ref{firstlift} to correct a mistake in \cite{dieulefait}, where the micro good dihedral prime $p=43$ is used to prove the automorphy of $\mathrm{Sym}^5$ of level $1$ modular forms.

\section{notations and preliminary facts}

As usual, for any $p$-adic field or number field, we write $\mathrm{G}_F$ for the absolute Galois group $\mathrm{Gal}(\overline{F}/F)$ and if $F$ is a number field and $S$ a finite set of places of $F$ (possibly containing the archimedean ones), $\mathrm{G}_{F,S}$ stands for $\mathrm{Gal}(F_S/F)$ where $F_S$ is the maximal extension of $F$ unramified outside $S$. For a rational prime $p>6$,  we set $G_p:=\mathrm{Gal}(\overline{\mathbb{Q}}_p/\mathbb{Q}_p)$ and we denote by $\omega:\mathrm{G}_{\mathbb{Q}}\to\mathbb{Z}_p^*$ the $p$-adic cyclotomic character. Let $k$ be a finite field of characteristic $p$.

Let $\mathbb{Q}_p^{ur}$ stand for the maximal unramified extension of $\mathbb{Q}_p$ and $I_p=\mathrm{Gal}(\overline{\mathbb{Q}}_p/\mathbb{Q}_p^{ur})$ for the inertia group. Now, let $\mathbb{Q}_{p,t}$ be the maximal tamely ramified extension of $\mathbb{Q}_p$ and denote $I_t=\mathrm{Gal}(\mathbb{Q}_{p,t}/\mathbb{Q}_p^{ur})$ the tame inertia group. The common residue field of $\overline{\mathbb{Q}}_p$, $\mathbb{Q}_p^{ur}$ and $\mathbb{Q}_{t,p}$ is $\overline{\mathbb{F}}_p$. There is an identification $\theta: I_t\cong\varprojlim\mathbb{F}_{p^n}^*$ and  hence a projection $\omega_2:I_t\to\mathbb{F}_{p^2}^*$. This projection factors through $\mathrm{Gal}(\mathbb{Q}_{p,t}/\mathbb{Q}_{p^2})$, where $\mathbb{Q}_{p^2}$ is the quadratic unramified extension of $\mathbb{Q}_p$. The fundamental characters of level $2$ are then $\omega_2$ and $\omega_2^p$.

We will still denote by $\omega$ the reduction of $\omega$ modulo $p$, unless a possibility of confusion may occur. Likewise, we will denote by $\tilde{\omega}_2$ the Teichmuller lift of $\omega_2$.

Let $\overline{\rho}:G_p\to \mathrm{GL}_2(k)$ be a residual representation with residual field $k$ and denote by $R(\overline{\rho})$ the universal deformation ring of $\overline{\rho}$ (if it exists) and by $R(\overline{\rho})^{\square}$ the universal frame deformation ring of $\overline{\rho}$ (which always exists, as well known). We say that $\overline{\rho}$ has trivial endomorphisms if $\mathrm{End}_{\mathrm{Gal}}(\overline{\rho})\cong k$. It is also well known after Ramakrishna, that if $\overline{\rho}$ has trivial endomorphisms then there exists the universal deformation ring $R(\overline{\rho})$.

Further, we will denote by $R^{ver}(\overline{\rho})$ the versal deformation ring for $\overline{\rho}$ introduced in \cite{hutan} Section 3.1, which is unique up to non-canonical isomorphism. By Lemma 2.1 of \cite{iyengar}, $R^{\square}(\overline{\rho})$ is formally smooth over $R^{ver}(\overline{\rho})$.

For a $p$-adic ring $\mathcal{O}$, the term \emph{complete Noetherian local $\mathcal{O}$-algebra} will be abridged by CNL-$\mathcal{O}$-algebra. If $R$ is a CNL-$\mathcal{O}$-algebra of of residual field $\mathbb{F}$, the ring $\overline{R}:=R\otimes\mathbb{F}$ is a local $\mathbb{F}$-algebra. Let $\frak{m}$ be its unique maximal ideal and $d$ its Krull dimension. We will denote by $e(R)$ the Hilbert-Samuel multiplicity of $\overline{R}$, namely, $d!$ times the leading coefficient of the polynomial in $n$ giving $\mathrm{dim}_F\left(\frac{\overline{R}}{\frak{m}^{n+1}}\right)$ for $n\gg 0$. For instance, $e(\mathcal{O}[[X]])=1$.

Let $F/\mathbb{Q}$ be a totally real finite extension and $\frak{N}$ an ideal of its ring of integers. Given a Hilbert cuspidal Hecke eigenform $f\in S_{\textbf{k}}(\Gamma_0(\frak{N}))$ for $F$ of parallel weight $\textbf{k}$ with even $k>2$  (analogously a classical Hecke eigenform $h\in S_k(\Gamma_0(N))$, we will denote by $K_f$ (analogously by $K_h$) its field of coefficients and by $\mathcal{O}_f$ (analogously $\mathcal{O}_h$) the corresponding rings of integers. 

If  $\frak{P}$ is a prime of $\mathcal{O}_f$ over $p$, denote by $\rho_{f,\frak{P}}:\mathrm{G}_{F}\to\mathrm{GL}_2(\mathcal{O}_{f,\frak{P}})$ the $p$-adic representation associated to $f$ by Blasius, Carayol, Rogawski, Wiles, Taylor and others.

For an automorphic form $\Theta$ over $F$ corresponding to a classical modular cusp form if $K=\mathbb{Q}$ or to a Hilbert modular cusp form $f$ otherwise, let $\pi_{\Theta}:\mathrm{GL}_2(\mathbb{A}_{F})\to\mathbb{C}$ be the adelic cuspidal automorphic representation associated  to $\Theta$. Consider the local decomposition $\pi_{\Theta}=\otimes_{\nu} \pi_{\nu}$, where $\nu$ runs over the archimedean or non-archimedean places of $F$. Then, $\Theta$ (or $\pi_{\Theta}$) is said to be supercuspidal at $\nu$ or $\nu$-supercuspidal (and, by abuse of notation, that $f$ is $\nu$-supercuspidal) if $\pi_{\nu}$ is so, namely, if its Jacquet module is zero-dimensional or equivalently, if it is not equivalent neither to a principal series, nor to a twisted Steinberg representation.

\begin{rem} For any classical or Hilbert modular eigenform $h$, since $p$ splits in $F$, taking $\frak{p}$ in $\mathcal{O}_F$ over $p$, given the family of local representations $\{\rho_{h,\frak{l}}|_{G_{\frak{p}}}:G_{\mathbb{Q}_p}\to\mathrm{GL}_2(\mathcal{O}_{\frak{l}}):\frak{l}\mbox{ prime of }\mathcal{O}_h\}$, by compatibility with the local Langlands correspondence, its attached admissible representation $\pi_{h,\frak{p}}$  is $\frak{p}$-supercuspidal if $\rho_{h,\frak{P}}|_{G_{\frak{p}}}$ is potentially crystalline for $\frak{P}$ over $p$ in $\mathcal{O}_h$ and its associated Weil-Deligne representation $WD(\rho_{h,\frak{P}}|_{G_{\frak{p}}})$ is induced from a character $\chi:W(\overline{\mathbb{Q}}_p/\mathbb{Q}_{p^2})\to \mathbb{C}^*$, not invariant by inner conjugation by $\mathrm{Gal}(\mathbb{Q}_{p^2}/\mathbb{Q}_p)$ (see \cite[Section 2]{dpt}). Abusing language for the sake of simplicity we will also say that $\rho_{h,\frak{P}}$ is $\frak{p}$-supercuspidal.
\end{rem}

Let $p>\max\{k+1,6\}$ be a prime such that $p\nmid N$ and suppose that $p$ splits in $F$. Let $\overline{\rho}_{f,\frak{P}}:G_{F}\to\mathrm{GL}_2(\mathbb{F}_{p^n})$ denote the reduction of $\rho_{f,\frak{P}}$ modulo  $\frak{P}$. For a classical modular form $h$, we have the analogous definition for $\overline{\rho}_{h,\frak{P}}:G_{\mathbb{Q}}\to\mathrm{GL}_2(\mathbb{F}_{p^n})$. Due to our choice of $p$, the Serre weight of $\overline{\rho}_{f,\frak{P}}$  (locally at any prime above $p$) is precisely $k$, since we are assuming in particular $p>k+1$, hence in the Fontaine-Laffaille range (since $p$ is totally split, we consider the Serre weight  defined exactly as in the case of classical modular forms).

Finally, since our goal is to prove Thm. \ref{thm1}, we will assume that the residual representations (for classical and Hilbert modular forms) have large image, namely, that $\mathrm{SL}_2(\mathbb{F}_p)$ is contained in the image of the residual representation.

\section{Proof of Theorem \ref{thm1}}

From now on, $F$ will be a totally real number field and $\frak{N}$ an ideal of its ring of integers of norm $N$. Let $f\in S_{\textbf{k}}(\frak{N})$ be a Hilbert cuspidal Hecke eigenform for $F$ with $\textbf{k}=(k,k,..,k)$ its parallel weight ($k>2$ even) and with trivial character. We will assume that $f$ is not a theta series, i.e., it has no CM. Denote by $K_f$ the field obtained by adjoining to $\mathbb{Q}$ the Hecke eigenvaloues of $f$ and by $\mathcal{O}_f$ its ring of integers. Take a prime $p>\max\{k+1,6\}$ with $p\nmid N$ totally split in $F$, choose a prime ideal $\frak{P}$ of $\mathcal{O}_f$ over $p$ and let $\rho_{f,\frak{P}}:G_F\to\mathrm{GL}_2(\mathcal{O}_{f,\frak{P}})$ be the global $\frak{P}$-adic Galois representation attached to $f$.

By \cite[Prop. 5.2]{dimitrov}, for all but finitely many primes $p$, there exists $\frak{P}$ in $\mathcal{O}_{f}$ such that $\mathrm{SL}_2(\mathbb{F}_q)\subseteq \mathrm{Im}(\overline{\rho}_{f,\frak{P}})$ for some power $q$ of $p$, so we can take $p$ as above so that $\mathrm{Im}(\rho_{f,\frak{p}})$ is large. Our goal is to prove:

\begin{thm}Let $F$ be a totally real number field, $p>\max\{k+1,6\}$ totally split in $F$ and $f\in S_{\textbf{k}}(\frak{N})$ as above. Suppose that the residual mod $\frak{P}$  representation $\overline{\rho}_{f,\frak{P}}$ satisfies $\mathrm{SL}_2(\mathbb{F}_p)\subseteq\mathrm{Im}(\overline{\rho}_{f,\frak{P}})$. Then, there exists a modular lift of $\overline{\rho}_{f,\frak{P}}$, say $\rho_{g,\frak{P}'}:G_F\to \mathrm{GL}_2(\mathcal{O}_{g,\frak{P}'})$, with $\frak{P}'$ a prime of $\mathcal{O}_g$ over $p$, such that $g\in S_{\textbf{2}}(\frak{N}p^2,\epsilon)$ for some Nebentypus character $\epsilon$ such that $\rho_{g,\frak{P}'}$ is supercuspidal at every prime of $F$ over $p$. 
\end{thm}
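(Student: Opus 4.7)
The plan is to prove the theorem via Galois deformation theory: construct a global deformation ring $\mathcal{R}_{global}$ of $\overline{\rho}_{f,\frak{P}}$ with local conditions tailor-made so that any characteristic zero point automatically produces a Hilbert modular form $g$ with the desired supercuspidal behaviour, and then invoke a modularity lifting theorem to produce the lift. The local conditions I would impose are: at each prime $\frak{p}$ of $F$ above $p$, a potentially crystalline supercuspidal condition of Hodge--Tate weights $\{0,1\}$ corresponding to a fixed inertial type; at each prime dividing $\frak{N}$, the minimally ramified (inertia-rigid) deformation matching $\rho_{f,\frak{P}}$; and at each archimedean place, the odd condition.

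The first step is to construct the local supercuspidal lifts at the primes above $p$. Since $p$ splits totally in $F$, each $\overline{\rho}_{f,\frak{P}}|_{G_{\frak{p}}}$ is a two-dimensional representation of $G_{\Q_p}$ of Serre weight $k$ by Fontaine--Laffaille ($p>k+1$). Using Savitt's result (Prop.~\ref{savitt}) I would pick a tame character $\chi\colon W(\overline{\Q}_p/\Q_{p^2})\to\overline{\Q}_p^{\times}$, built from powers of the Teichm\"uller lift $\widetilde{\omega}_2$, whose restriction to inertia reduces to a character compatible with $\overline{\rho}_{f,\frak{P}}|_{G_{\frak{p}}}$ and is not fixed by the nontrivial element of $\mathrm{Gal}(\Q_{p^2}/\Q_p)$. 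Inducing $\chi$ produces a supercuspidal Weil--Deligne representation of weight $\mathbf{2}$, and the associated potentially crystalline deformation ring $R^{ver}_{\frak{p}}$ is formally smooth of the expected dimension over the appropriate coefficient ring, hence of Hilbert--Samuel multiplicity $1$.

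The central and most delicate step will be showing that $\mathcal{R}_{global}$ has positive Krull dimension over $\overline{\Z}_p$. I would follow the Poitou--Tate based counting argument developed in \cite{blancodieulefait}, assembling the local Euler characteristic contributions at each place in the ramification set $S$, the formal smoothness of the local supercuspidal rings at each $\frak{p}\mid p$, the inertia-rigid contributions at places dividing $\frak{N}$, and the vanishing of the adjoint invariants $H^0(G_{F,S},\mathrm{ad}^0\overline{\rho}_{f,\frak{P}})$ and $H^0(G_{F,S},\mathrm{ad}^0\overline{\rho}_{f,\frak{P}}(1))$ guaranteed by the large-image hypothesis $\SL_2(\F_p)\subseteq\mathrm{Im}(\overline{\rho}_{f,\frak{P}})$. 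The global Euler--Poincar\'e formula then collapses the cohomological terms to yield $\dim\mathcal{R}_{global}\geq 1$, so that a $\overline{\Z}_p$-valued point of $\mathcal{R}_{global}$ exists.

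To finish, I would appeal to Gee's modularity lifting theorem for Hilbert modular forms (\cite{gee}) in the large-image, potentially crystalline, Fontaine--Laffaille setting. Since $\overline{\rho}_{f,\frak{P}}$ is residually modular, every $\overline{\Z}_p$-valued point of $\mathcal{R}_{global}$ corresponds to an automorphic Galois representation; by the choice of local conditions at $\frak{p}\mid p$, this representation comes from a Hilbert modular form $g\in S_{\mathbf{2}}(\frak{N}p^2,\epsilon)$ that is $\frak{p}$-supercuspidal at each $\frak{p}\mid p$, proving the theorem. The main obstacle, as was already the case in \cite{blancodieulefait}, is the dimension count: one has to choose the supercuspidal inertial types at the $\frak{p}\mid p$ \emph{rigidly} — a single character, not a sum — so that the local rings $R^{ver}_{\frak{p}}$ are formally smooth of precisely the dimension needed for the Poitou--Tate inequality to close up and produce a positive-dimensional global ring rather than merely a zero-dimensional one.
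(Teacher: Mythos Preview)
Your overall architecture matches the paper's: build local supercuspidal lifts at $\frak{p}\mid p$ via Savitt, assemble a global deformation ring $\mathcal{R}_{global}$ with those local conditions, prove $\dim\mathcal{R}_{global}\geq 1$ by the Poitou--Tate/Wiles count as in \cite{blancodieulefait}, and finish with modularity. However, there is a genuine gap in the passage from $\dim\mathcal{R}_{global}\geq 1$ to the existence of a $\overline{\Z}_p$-point.

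Knowing only that a CNL $\mathcal{O}$-algebra has Krull dimension at least $1$ does \emph{not} by itself produce a $\overline{\Z}_p$-valued point: think of $\overline{\mathbb{F}}_p[[X]]$. What is needed in addition is that $\mathcal{R}_{global}$ be a \emph{finitely generated $\mathcal{O}$-module}; then B\"ockle's lemma (\cite{bockle}) combines finiteness with $\dim\geq 1$ to give that $\mathcal{R}_{global}$ is finite flat and a complete intersection over $\mathcal{O}$, hence has a $\overline{\Z}_p$-point. The paper obtains this finiteness not directly over $F$ but by a detour: one base-changes $\overline{\rho}_{f,\frak{P}}$ to a suitable imaginary CM quadratic extension $L/F$ in which all places of $S$ split, and applies Theorem~4.4.1 of \cite{gee} (the BLGGT automorphy machinery). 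Item~(7) of that theorem, which is proved in the course of the argument in \cite{gee}, yields that the corresponding deformation ring $\mathcal{R}_{global,L}$ is a finite $\mathcal{O}$-module; one then descends this finiteness to $\mathcal{R}_{global}$ (Prop.~\ref{fgeneration}, as in \cite[Prop.~3.18]{blancodieulefait}). Only at that point can one invoke B\"ockle and extract the $\overline{\Z}_p$-point. Modularity of the resulting lift is then deduced not by a direct modularity-lifting theorem over $F$, but by observing that its restriction to $G_L$ is modular (again \cite[Thm.~4.2.1]{gee}) and descending via cyclic base change (\cite{clozel}). Your sketch elides this entire CM base-change step, which is the mechanism that converts the dimension lower bound into an actual characteristic-zero point.

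A smaller inaccuracy: you assert that the local supercuspidal ring $R^{ver}_{\frak{p}}$ is formally smooth of multiplicity $1$. The paper only secures $\mu_{gal}=1$ in the non-split cases; in the ordinary split case (Case~1.2 of Prop.~\ref{firstlift}) it uses \cite[Rem.~5.7]{hutan} to get $e(R^{ver})\geq 1$, which suffices to produce a framed lift but does not give multiplicity exactly $1$. What the argument actually needs at $\frak{p}\mid p$ is that the relevant irreducible component of the potentially crystalline framed ring has Krull dimension $4$ (Prop.~\ref{krull}, via \cite{allen}) and that inertial types are constant on components (\cite{ki3}), not formal smoothness per se.
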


Recall that given a profinite group $\Pi$, a closed subgroup $I$, a $p$-adic ring $\mathcal{O}$ and a CNL $\mathcal{O}$-algebra $R$, a representation $\rho:\Pi\to\mathrm{GL}_2(R)$ is called $I$-ordinary if, denoting $M:=R\times R$ with the $\Pi$-module structure given by $\rho$, the $R$-submodule $M^I$ invariant by $I$ is free of rank $1$ over $R$ and a direct summand of $M$. In our setting, we consider $\Pi=G_{\frak{p}}$, $R=\mathcal{O}_{f,\frak{P}}$ and $I=I_{\frak{p}}$ where $\frak{P}$ (resp. $\frak{p}$) are primes of $\mathcal{O}_f$ (resp. of $F$) over $p$.

We will make use of the following result:
\begin{thm}Let $f\in S_{\textbf{k}}(\frak{N})$ be a Hilbert cuspidal Hecke eigenform of parallel weight $\textbf{k}=(k,k,..,k)$, with $k>2$ even, trivial character and such that $p$ is totally split in $F$ and $p\nmid N$. Let $\frak{P}$ be a prime of $\mathcal{O}_f$ over $p$ and let $\frak{p}$ be a prime of $F$ over $p$. Then:
\begin{itemize}
\item[a)] If $\rho_{f,\frak{P}}|_{G_{\frak{p}}}$ is ordinary, then 
$$
\overline{\rho}_{f,\frak{P}}|_{I_{\frak{p}}}\cong \left(\begin{array}{cc} \omega^{k-1} & *\\ 0 & 1\end{array}\right).
$$
\item[b)] If $\rho_{f,\frak{P}}|_{G_{\frak{p}}}$ is not ordinary then
$$
\overline{\rho}_{f,\frak{P}}|_{I_{\frak{p}}}\cong\left(\begin{array}{cc}\omega_2^{k-1} & 0\\ 0 & \omega_2^{p(k-1)}\end{array}\right).
$$
\end{itemize}
\label{arranque}
\end{thm}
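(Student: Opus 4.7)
The plan is to reduce the question to the classical local setting at $\mathbb{Q}_p$ and then apply Fontaine--Laffaille theory. The total splitting hypothesis is crucial: since $p$ is totally split in $F$, the completion $F_{\frak{p}}$ is canonically isomorphic to $\mathbb{Q}_p$, so $G_{\frak{p}}\cong G_{\mathbb{Q}_p}$, $I_{\frak{p}}\cong I_p$, and $\rho_{f,\frak{P}}|_{G_{\frak{p}}}$ can be treated as a two-dimensional $p$-adic Galois representation of $G_{\mathbb{Q}_p}$, to which the full classical local theory applies verbatim.

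First I would invoke the results of Blasius--Rogawski, Carayol, Taylor, and Saito to conclude that $\rho_{f,\frak{P}}|_{G_{\frak{p}}}$ is crystalline with Hodge--Tate weights $\{0,k-1\}$: the unramifiedness of $\pi_f$ at $\frak{p}$ (which follows from $p\nmid N$), the parallel weight $\textbf{k}$, and compatibility of the global representation with local Langlands at a split unramified prime together give crystallinity and pin down the Hodge--Tate weights. Since the Nebentypus is trivial, $\det \rho_{f,\frak{P}}=\omega^{k-1}$, hence on inertia $\det \overline{\rho}_{f,\frak{P}}|_{I_{\frak{p}}}=\omega^{k-1}$. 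The assumption $p>k+1$ ensures the Hodge--Tate weights lie in the Fontaine--Laffaille range $[0,p-2]$.

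For part (a), I would apply Wiles' theorem on ordinary Galois representations attached to modular eigenforms, and its extension to Hilbert eigenforms due to Hida and Taylor: in the ordinary case the representation is locally of the shape
$$
\rho_{f,\frak{P}}|_{G_{\frak{p}}}\cong\begin{pmatrix}\lambda_1\omega^{k-1}&*\\0&\lambda_2\end{pmatrix}
$$
with $\lambda_1,\lambda_2$ unramified characters constrained by $\lambda_1\lambda_2=1$ via the determinant computation. Reducing modulo $\frak{P}$ and restricting to $I_{\frak{p}}$ kills the unramified factors and yields precisely the diagonal $(\omega^{k-1},1)$ claimed.

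For part (b), I would invoke the classification of two-dimensional irreducible Fontaine--Laffaille modules of Hodge--Tate weights $\{0,k-1\}$ (the non-ordinary crystalline case in the Fontaine--Laffaille range is, by Fontaine--Laffaille and Serre, absolutely irreducible); the residual restriction to inertia is, up to the unramified twist that disappears when passing from $G_{\frak{p}}$ to $I_{\frak{p}}$, the sum of the fundamental characters of level $2$ raised to the Hodge--Tate weight, namely $\omega_2^{k-1}\oplus\omega_2^{p(k-1)}$. The main subtlety is just to verify the Hodge--Tate weight normalization and the absence of spurious twists on inertia; both are standard bookkeeping within the Fontaine--Laffaille framework and do not require any new ingredient beyond the cited results.
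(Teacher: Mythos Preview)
Your proposal is correct and follows essentially the same route as the paper: reduce to $G_{\mathbb{Q}_p}$ via the totally split hypothesis, establish crystallinity with Hodge--Tate weights $\{0,k-1\}$ from $p\nmid N$, note that $p>k+1$ places us in the Fontaine--Laffaille range, and then read off the residual inertial shape. The paper compresses all of this into a single citation to Breuil--M\'ezard, while you unpack the ordinary and non-ordinary cases separately (citing Wiles/Hida/Taylor and Fontaine--Laffaille respectively, much as the paper does for the classical analogue in the next theorem); once you have reduced to a crystalline representation of $G_{\mathbb{Q}_p}$ in the Fontaine--Laffaille range, the purely local classification already yields both cases without needing the Hilbert-specific ordinary theory, but your extra references do no harm.
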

\begin{proof}
First, since $p\nmid N$, it is well known that the local representation $\rho_{f,\frak{P}}|_{G_{\frak{p}}}$ is crystalline and has Hodge-Tate weights $\{0, k-1 \}$. 

Second, recall that $p$ is totally split in $F$ so that $G_{\frak{p}}\cong G_p$ for each $\frak{p}$ prime of $F$ over $p$, and, third, $p$ is in the Fontaine-Laffaille range since $p>k+1$ by assumption. 

Hence, as proved in \cite{bm} the result follows.
\end{proof}

Since our result also applies to classical modular cuspforms, we recall the analogous notions for the simpler classical setting. First, recall that a normalised cuspidal Hecke eigenform is $p$-ordinary if its $p$-th Fourier coefficient is a $p$-adic unit. We will also make use of the following result:
\begin{thm}Let $h\in S_k(\Gamma_0(N))$ be a normalised Hecke eigenform, $p$ a prime such that $p\nmid N$ and $\frak{P}$ a prime of $\mathcal{O}_h$ above $p$. Then:
\begin{itemize}
\item[a)] If $h$ is $p$-ordinary, then 
$$
\overline{\rho}_{h,\frak{P}}|_{I_{\frak{p}}}\cong \left(\begin{array}{cc} \omega^{k-1}& *\\ 0 & 1\end{array}\right).
$$
\item[b)] If $h$ is not $p$-ordinary then
$$
\overline{\rho}_{h,\frak{P}}|_{I_\frak{p}}\cong\left(\begin{array}{cc}\omega_2^{k-1} & 0\\ 0 & \omega_2^{(k-1)p}\end{array}\right).
$$
\end{itemize}
\label{arranqueclassic}
\end{thm}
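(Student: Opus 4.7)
The plan is essentially to transcribe the proof of Theorem \ref{arranque} to the case $F=\mathbb{Q}$, where the unique prime $\frak{p}$ above $p$ is $p$ itself, so $G_{\frak{p}}=G_p$ and $I_{\frak{p}}=I_p$, and the level $\Gamma_0(N)$ corresponds to trivial nebentypus. Three inputs are needed: (i) the local representation $\rho_{h,\frak{P}}|_{G_p}$ is crystalline, since $p\nmid N$, by local-global compatibility at $p$ (due to Saito, based on work of Carayol, Deligne, Langlands); (ii) its Hodge-Tate weights are $\{0,k-1\}$, directly from Deligne's construction of $\rho_{h,\frak{P}}$; and (iii) in the paper's context we have $p>k+1$, hence we lie in the Fontaine-Laffaille range and the reduction modulo $\frak{P}$ is governed by a Fontaine-Laffaille module with Hodge weights $\{0,k-1\}$.

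For case (a), the $p$-ordinarity of $h$, i.e.\ the condition that $a_p \in \mathcal{O}_{h,\frak{P}}^{\times}$, is classically equivalent (Deligne, Mazur--Wiles) to $\rho_{h,\frak{P}}|_{G_p}$ being reducible of the shape $\left(\begin{array}{cc}\chi_1 & * \\ 0 & \chi_2\end{array}\right)$, with $\chi_2$ unramified and determined by sending $\mathrm{Frob}_p$ to the $\frak{P}$-adic unit root $\alpha$ of $x^2 - a_p x + p^{k-1}$. Since the nebentypus is trivial, the determinant is the $(k-1)$-th power of the cyclotomic character, so $\chi_1\chi_2 = \omega^{k-1}$ (both as $p$-adic and as mod-$p$ characters). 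Reducing modulo $\frak{P}$ and restricting to $I_p$, on which $\chi_2$ is trivial, delivers the stated matrix form with $\omega^{k-1}$ in the upper-left and $1$ in the lower-right entry.

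For case (b), non-$p$-ordinarity forces $\overline{\rho}_{h,\frak{P}}|_{G_p}$ to be irreducible; a direct computation of two-dimensional irreducible Fontaine-Laffaille modules of Hodge type $\{0,k-1\}$ (the same reference \cite{bm} invoked in the proof of Theorem \ref{arranque}) shows that the restriction to inertia is tamely ramified, factors through $I_t$, and is diagonal with the two fundamental characters of level $2$, namely $\omega_2^{k-1}$ and $\omega_2^{p(k-1)}$. Thus the statement reduces to a specialisation of Theorem \ref{arranque} to $F=\mathbb{Q}$ and there is no new obstacle; the only point requiring a moment of care is that the ordering of the two level-$2$ characters in (b) is a matter of convention (they are Galois conjugates via $\mathrm{Gal}(\mathbb{Q}_{p^2}/\mathbb{Q}_p)$), so either presentation yields the same isomorphism class of residual inertial representation.
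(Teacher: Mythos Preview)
Your argument is correct and relies on the same classical inputs as the paper; the paper's own proof is in fact just two citations (Joshi--Khare for (a) and Edixhoven, attributing Fontaine, for (b)), whereas you have chosen to sketch the underlying mechanism and cite \cite{bm} as in Theorem~\ref{arranque}. One small remark: in (b) you phrase it as ``non-$p$-ordinarity forces $\overline{\rho}_{h,\frak{P}}|_{G_p}$ to be irreducible'', which is the conclusion rather than the input---the cleaner logic is that under the Fontaine--Laffaille hypothesis $p>k+1$ the reduction is computed directly from the filtered module, and non-ordinarity (i.e.\ $v_{\frak{P}}(a_p)>0$) is exactly the case where that computation yields the induced-from-level-$2$ shape.
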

\begin{proof}For a) this is well known (see, for instance the proof of \cite[Thm. 2.2]{joshikhare}). As for b) this is a result by Fontaine, a proof of which can be found in \cite[Section 6.8]{edixhoven}.
\end{proof}

In our proof of Thm. \ref{thm1} we will follow the guidelines we developed in \cite{blancodieulefait}. First, observe that for our input datum $f\in S_{\textbf{k}}(\Gamma_0(\frak{N}))$, it is $\mathrm{det}(\rho_{f,\frak{P}})=\omega^{k-1}$. Since our sought for $\rho_{g,\frak{P}'}$ must be a deformation of $\overline{\rho}_{f,\frak{P}}$ and $g$ must have parallel weight $2$, it must be $\mathrm{det}(\rho_{g,\frak{P}'})=\epsilon\omega$ for a finite order character $\epsilon$. Since on the other hand $\epsilon\omega$ should be a lift of $\overline{\omega}^{k-1}$, it must be $\overline{\epsilon}=\overline{\omega}^{k-2}$.

Our first task is to construct a global deformation ring $\mathcal{R}_{global}$ parametrizing potentially crystalline $p$-supercuspidal deformations of $\overline{\rho}_{f, \frak{P}}$ of determinant $\mu=\epsilon\omega$ and to show that $\mathrm{dim}(\mathcal{R}_{global})\geq 1$. After that, we will prove that $\mathcal{R}_{global}$ is finitely generated as an $\mathcal{O}$-module and that there exist a point $\psi: \mathcal{R}_{global}\to\overline{\mathbb{Z}}_p$ corresponding to a $p$-supercuspidal weight $2$ modular cuspform.

\subsection{The $p$-local universal deformation ring}

Given the fundamental exact sequence $1\to \mathrm{I}_p\to \mathrm{G}_p\to\mathrm{Gal}(\mathbb{Q}_p^{ur}/\mathbb{Q}_p)\to 1$, the local Weil group $W_p$ is defined as $W_p=\{\sigma\in \mathrm{G}_p: \sigma|_{\mathbb{Q}^{ur}_p}=\phi_p^r,\,r\in\mathbb{Z}\}$, where $\phi_p$ is a Frobenius element. It is a dense subgroup of $\mathrm{G}_p$ and clearly $\mathrm{I}_p\subseteq W_p$.

\begin{defn}(\cite[Def. 2.1.1.1]{bm})A Galois type of degree $2$ is a representation $\tau:I_p\to \mathrm{GL}_2(\overline{\mathbb{Q}}_p)$ of open kernel such that it extends to $W_p$.
\end{defn}
For a representation $\rho:G_p\to \mathrm{GL}_2(\overline{\mathbb{Q}}_p)$, denote by $WD(\rho)$ its associated Weil-Deligne representation (see \cite[Def. 2.4]{sa}). 
\begin{defn}Let $\tau$ be a Galois type of degree 2 and let $\rho:G_p\to\mathrm{GL}_2(\mathcal{O})$ be a deformation of $\overline{\rho}$ where $\mathcal{O}$ is a $p$-adic ring. We say that $\rho$ is of type $(k,\tau)$ where $k\geq 1$ if 
\begin{itemize}
\item $\rho$ is potentially crystalline and $\tau(\rho):=WD(\rho)|_{I_p}=(\tau,0)$.
\item $\rho$ has Hodge-Tate weights $\{0,k-1\}$, and
\item $\mathrm{det}(\rho)$ is a fixed lift of $\mathrm{det}(\overline{\rho})$ of the following form: the $(k-1)$-st power of the $p$-adic cyclotomic character times a finite character of order prime to $p$.
\end{itemize}
If $\overline{\rho}$ has trivial endomorphisms then, for $\iota: R(\overline{\rho})\to\mathcal{O}$ the morphism induced by $\rho$, we say that $\frak{p}:=\mathrm{Ker}(\iota)$ is also of type $(k,\tau)$.
\end{defn}

\begin{defn}[\cite{sa}] Let $\tau$ be a Galois type, $k\geq 1$ and $\overline{\rho}$ a residual representation as above. If it does not exist any prime $\frak{p}\in \mathrm{Spec}(R(\overline{\rho}))$ of type $(k,\tau)$, we define $R(k,\tau,\overline{\rho}):=\{0\}$, otherwise $\displaystyle R(k,\tau,\overline{\rho}):=R(\overline{\rho})/\cap\frak{p}$, where the intersection runs over all the primes $\frak{p}$ of type $(k,\tau)$.
\label{rtype}
\end{defn}

\begin{rem}As it is defined, it follows that $R(k,\tau,\overline{\rho})$ is the largest quotient of $R(\overline{\rho})$ such that for any $\rho$, potentially semistable deformation of $\overline{\rho}$, if 
\begin{itemize}
\item[a)] $\rho\otimes\mathbb{Q}_p$ is potentially crystalline of Hodge-Tate weight $\{0,k-1\}$ and
\item[b)] $WD(\rho\otimes\mathbb{Q}_p)|_{I_p}\cong\tau$,
\end{itemize}
then $\rho$ factors through $R(k,\tau,\overline{\rho})$.
\end{rem}

Let $\mu_{gal}(k,\overline{\tau},\overline{\rho}):=e(R(k,\tau,\overline{\rho}))$. In particular, notice that if $\mu_{gal}(k,\overline{\tau},\overline{\rho})\neq 0$ then $R(k,\tau,\overline{\rho})\neq \{0\}$.

Let $m\geq 1$ be a divisor of $p^2-1$ and $\tau=\tilde{\omega}_2^m\oplus\tilde{\omega}_2^{pm}$ a Galois type of order $\frac{p^2-1}{(m,p^2-1)}$. For $\tau$ to be a $p$-supercuspidal representation its order cannot divide $p-1$, hence, $m=p-1$ is admissible. Our starting point is the following result:

\begin{thm}[\cite{sa} Theorem 1.4] Let $p$ be any prime and $\overline{\rho}:G_p\to\mathrm{GL}_2(\overline{\mathbb{F}}_p)$ a residual representation with trivial endomorphisms and $\tau\cong\tilde{\omega}_2^m\oplus\tilde{\omega}_2^{pm}$ a Galois type with  $p+1\nmid m$. Write $m=i+(p+1)j$ with $i\in\{1,...,p\}$ and $j\in\mathbb{Z}/(p-1)\mathbb{Z}$. Then:
\begin{itemize}
\item[1)]If $\overline{\rho}_{I_p}\otimes\overline{\mathbb{F}}_p\in\left\{\left(\begin{array}{cc}
\omega^{i+j} & \lambda_1\\
0 & \omega^{1+j}
\end{array}\right),\left(\begin{array}{cc}
\omega^{1+j} & \lambda_2\\
0 & \omega^{i+j}
\end{array}\right), \right\}$ with $\lambda_1$ \textit{peu ramifi\'e} if $i=2$ and $\lambda_2$ \textit{peu ramifi\'e} if $i=p-1$, then $\mu_{gal}(2,\tau,\overline{\rho})=1$.
\item[2)]If $\overline{\rho}_{I_p}\otimes\overline{\mathbb{F}}_p\in\left\{ \omega_2^{p+m}\oplus\omega_2^{1+pm},\omega_2^{1+m}\oplus\omega_2^{p(1+m)}\right\}$ then $\mu_{gal}(2,\tau,\overline{\rho})=1$.
\end{itemize}
\label{savitt}
\end{thm}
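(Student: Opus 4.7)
The plan is to reinterpret the local deformation ring $R(2,\tau,\overline{\rho})$ in terms of integral $p$-adic Hodge theory and then classify all possible weight-$2$ type-$\tau$ lifts of $\overline{\rho}|_{G_p}$ explicitly. Because $\tau = \tilde{\omega}_2^m \oplus \tilde{\omega}_2^{pm}$ is an irreducible Galois type (thanks to $p+1\nmid m$), every potentially crystalline lift of type $(2,\tau)$ becomes crystalline after restriction to the quadratic unramified extension $\mathbb{Q}_{p^2}$, so I would study the Breuil module (equivalently, the Kisin module) with descent data from $\mathbb{Q}_{p^2}$ to $\mathbb{Q}_p$ attached to such a lift. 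This is the framework set up by Breuil--M\'ezard and refined by Savitt; the whole question becomes the classification of strongly divisible lattices inside a two-dimensional filtered $\varphi$-module with descent data of type $\tau$, and of the compatible reductions modulo $p$.

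First I would write down the generic form of such a strongly divisible module $\mathcal{M}$ of rank $2$: two basis vectors $e_1, e_2$ on which the descent data act by $\tilde{\omega}_2^m$ and $\tilde{\omega}_2^{pm}$ respectively, with $\varphi$ and filtration determined by a single parameter because the Hodge--Tate weights are $\{0,1\}$. Writing $m=i+(p+1)j$ with $i\in\{1,\dots,p\}$ normalises the exponents so that the reduction $\overline{\mathcal{M}}=\mathcal{M}\otimes \overline{\mathbb{F}}_p$ can be computed by hand. A direct calculation of Frobenius on $\overline{\mathcal{M}}$ shows that when $\mathcal{M}$ remains irreducible after reduction, its associated $I_p$-representation is exactly $\omega_2^{p+m}\oplus\omega_2^{1+pm}$ or $\omega_2^{1+m}\oplus\omega_2^{p(1+m)}$; when it becomes reducible, one recovers precisely the two block-upper-triangular shapes stated in case $(1)$, with the extension class governed by the off-diagonal entry in $\mathcal{M}$.

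Next I would show that in each of the four admissible shapes of $\overline{\rho}|_{I_p}$, the functor of type-$(2,\tau)$ deformations of $\overline{\rho}$ is representable by a formally smooth $\mathcal{O}$-algebra of relative dimension zero. For this I would use Kisin's theorem that $\mathrm{Spec}\,R(2,\tau,\overline{\rho})[1/p]$ is regular (since weight $2$ is Fontaine--Laffaille low and $\tau$ is tame), together with a tangent-space count: under the rigidity provided by the fixed determinant lift and by the irreducibility of $\tau$, the relevant $H^1_f$ vanishes, so the ring is $\mathcal{O}$-flat and formally smooth of dimension $1$. Consequently $\overline{R(2,\tau,\overline{\rho})}\cong \overline{\mathbb{F}}_p$ and its Hilbert--Samuel multiplicity is $1$. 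The boundary cases $i=2$ and $i=p-1$ in case $(1)$ need the extra \emph{peu ramifi\'e} hypothesis because only then does the off-diagonal entry in $\mathcal{M}$ actually lift the prescribed extension class; a \emph{tr\`es ramifi\'e} extension would force $\mu_{gal}=0$.

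The main obstacle is precisely this last point: verifying the dichotomy \emph{peu vs.\ tr\`es ramifi\'e} at the two extremal values of $i$ requires a careful analysis of the Frobenius endomorphism on $\overline{\mathcal{M}}$ and of the filtration step that distinguishes the two types of extensions of $\omega$ by $1$ (or of $\omega^{-1}$ by $1$, depending on the block). Away from $i\in\{2,p-1\}$ the extension class is automatically controlled by the descent data, but for these two values an extra computation inside the filtered module, combined with Serre's characterisation of peu ramifi\'e extensions by their splitting over a specific tame extension, is needed to rule out any spurious component of the local deformation space. Once this verification is carried out, both items of the theorem follow.
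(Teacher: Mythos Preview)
The paper does not prove this statement at all: Theorem~\ref{savitt} is quoted as Theorem~1.4 of \cite{sa} and used purely as a black box, so there is no proof in the present paper to compare your sketch against.

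For what it is worth, your outline is broadly in the spirit of Savitt's actual argument in \cite{sa}---classify rank-two Breuil modules (strongly divisible lattices) with tame descent data of type $\tau$ from $\mathbb{Q}_{p^2}$ to $\mathbb{Q}_p$, compute their mod-$p$ reductions, and match them against the listed shapes of $\overline{\rho}|_{I_p}$---but two steps in your sketch are not how the multiplicity-one conclusion is reached and would not suffice as written. First, Savitt does not deduce $e=1$ from regularity of $\mathrm{Spec}\,R(2,\tau,\overline{\rho})[1/p]$ together with a vanishing-$H^1_f$ tangent-space count; that argument controls the generic fibre but says nothing about the special fibre, which is exactly what the Hilbert--Samuel multiplicity measures. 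Instead, the integral classification of strongly divisible modules with the prescribed descent data and reduction is carried out explicitly, and one reads off that the deformation space is $\mathrm{Spf}\,\mathcal{O}$, whence $e=1$. Second, the remark that ``weight $2$ is Fontaine--Laffaille low'' is misleading: the lifts are only \emph{potentially} crystalline, Fontaine--Laffaille theory over $\mathbb{Q}_p$ does not apply, and it is precisely the descent-data computation over $\mathbb{Q}_{p^2}$ that carries the argument.
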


For such a choice of $\tau$, we can apply Thms. \ref{savitt} and \ref{arranque} to $\overline{\rho}=\overline{\rho}_{f,\frak{P}}|_{G_{\frak{p}}}$:

\begin{prop}Let $f\in S_{\textbf{k}}(\frak{N})$ be a Hilbert cuspform which a normalised Hecke eigenform for $F$ with ring of coefficients $\mathcal{O}$, suppose $f$ is of parallel weight $\textbf{k}=(k,k,..,k)$, with $k> 2$ even and has trivial character.  Then, for each prime $p>\max\{k+1,6\}$ split on $F$ such that $p\nmid N$ and for every prime $\frak{p}|p$, there exists $m>0$, a prime $\frak{P}$ of $\mathcal{O}$ over $p$ and a potentially crystalline deformation $\rho:G_p\to\mathrm{GL}_2(\mathcal{O}_{\frak{P}})$ of $\overline{\rho}_{f,\frak{P}}|_{G_{\frak{p}}}$ of type $(2,\tau)$ where $\tau\cong \tilde{\omega}_2^m\oplus\tilde{\omega}_2^{pm}$ is a supercuspidal inertial type. Moreover, $m$ can be taken to be equal to $k+(p+1)(p-2)$.
\label{firstlift}
\end{prop}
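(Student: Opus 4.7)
The plan is to apply Savitt's theorem (Theorem \ref{savitt}) to the residual local representation $\overline{\rho}_{f,\frak{P}}|_{G_{\frak{p}}}$, whose explicit shape on inertia is supplied by Theorem \ref{arranque}. The central idea is to exhibit a single value of $m$, namely $m=k+(p+1)(p-2)$, that makes the hypotheses of Theorem \ref{savitt} simultaneously satisfied in both the ordinary and non-ordinary cases.

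First I would verify that $\tau:=\tilde{\omega}_2^m\oplus\tilde{\omega}_2^{pm}$ is a genuine supercuspidal type. Since $(p+1)(p-2)\equiv 0\pmod{p+1}$, one has $m\equiv k\pmod{p+1}$, and because $k>2$ is even while $p>k+1$ is odd one obtains $4\leq k\leq p-3$, so $p+1\nmid m$ and $\tau$ is irreducible. I would then decompose $m=i+(p+1)j$ with $i\in\{1,\dots,p\}$ and $j\in\Z/(p-1)\Z$, obtaining $i=k$ and $j=p-2$.

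In the ordinary case, Theorem \ref{arranque}(a) says that $\overline{\rho}_{f,\frak{P}}|_{I_{\frak{p}}}$ is upper triangular with diagonal $\omega^{k-1}$ and $1$. From $i+j\equiv k-1\pmod{p-1}$ and $1+j\equiv 0\pmod{p-1}$, this matches the first alternative of case $(1)$ of Theorem \ref{savitt}. The \emph{peu ramifi\'e} proviso is vacuous here, since it triggers only when $i=2$ or $i=p-1$, and neither of these holds because $i=k$ is even with $4\leq k\leq p-3$. In the non-ordinary case, Theorem \ref{arranque}(b) gives $\overline{\rho}_{f,\frak{P}}|_{I_{\frak{p}}}\cong\omega_2^{k-1}\oplus\omega_2^{p(k-1)}$, and working modulo $p^2-1$ with $p^2\equiv 1$ one verifies $p+m\equiv k-1$ and $1+pm\equiv p(k-1)$, landing us in the first alternative of case $(2)$ of Theorem \ref{savitt}. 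The trivial endomorphisms hypothesis is satisfied in both cases, as the two diagonal characters are distinct (again using $1\leq k-1<p+1$).

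Therefore $\mu_{gal}(2,\tau,\overline{\rho}_{f,\frak{P}}|_{G_\frak{p}})=1$, and by Definition \ref{rtype} the ring $R(2,\tau,\overline{\rho}_{f,\frak{P}}|_{G_\frak{p}})$ is nonzero. Any closed point of this ring with values in the ring of integers of a finite extension of $\Q_p$ produces the desired potentially crystalline deformation $\rho:G_p\to\mathrm{GL}_2(\mathcal{O}_{\frak{P}})$ of type $(2,\tau)$ for a suitable prime $\frak{P}$ above $p$. The main delicate point is the modular-arithmetic verification in the non-ordinary case; once those congruences are in place, the proposition reduces to a direct invocation of Theorem \ref{savitt}.
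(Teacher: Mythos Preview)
Your overall strategy coincides with the paper's: choose $m=k+(p+1)(p-2)$, decompose $m=i+(p+1)j$ with $i=k$ and $j\equiv -1\pmod{p-1}$, and invoke Theorem~\ref{savitt} in its cases (1) and (2) according to whether the local representation is ordinary or not. The arithmetic checks you give for $p+1\nmid m$, for the matching of exponents in both cases, and for the irrelevance of the \emph{peu ramifi\'e} proviso are correct and agree with the paper's.

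There is, however, a genuine gap in your treatment of the ordinary case. You assert that the trivial-endomorphisms hypothesis of Theorem~\ref{savitt} holds ``as the two diagonal characters are distinct''. This is false when $\overline{\rho}:=\overline{\rho}_{f,\frak{P}}|_{G_\frak{p}}$ is \emph{split}: if $\overline{\rho}\cong\chi_1\oplus\chi_2$ with $\chi_1\neq\chi_2$, then $\mathrm{End}_{G_p}(\overline{\rho})\cong k\times k\neq k$, and Savitt's theorem does not apply. Nothing in Theorem~\ref{arranque}(a) prevents the entry $*$ from vanishing. The paper isolates this as a separate subcase: since $2<k<p-1$ rules out $\chi_1\chi_2^{-1}=\omega^{\pm 1}$, one may invoke Remark~5.7 of Hu--Tan \cite{hutan} to compare the \emph{versal} ring $R^{ver}(2,\tau,\overline{\rho})$ with the universal ring of a non-split extension $\overline{\rho_1}$ having the same inertial diagonal (to which Savitt does apply), yielding $e(R^{ver}(2,\tau,\overline{\rho}))\geq e(R(2,\tau,\overline{\rho_1}))=1$. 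Passing to the framed ring, which is formally smooth over the versal one, then produces the required lift of type $(2,\tau)$. Without this step your argument is incomplete in the ordinary split case.

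A minor remark on the non-ordinary case: your conclusion there is correct, but the reason is not merely ``distinct diagonal characters''. Rather, $\overline{\rho}|_{G_p}$ is irreducible (the two level-2 characters are conjugate under Frobenius, so the representation is an induction), and trivial endomorphisms follows from Schur's lemma.
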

\begin{proof}Consider $\overline{\rho}:=\overline{\rho}_{f,\frak{P}}|_{G_{\frak{p}}}$. Since $p>k+1$, we have that $p+1\nmid k$. Let $m:=k-1-p+t(p^2-1)$ with $t$ such that $m>0$, e.g. take $t\geq \lceil\frac{p+1-k}{p^2-1} \rceil$. Clearly $p+1\nmid m$. On the other hand, since $p$ is totally split in $F$ we have $I_{\frak{p}}\cong I_p$.

Case 1: if $\rho_{f,\frak{P}}|_{G_{\frak{p}}}$ is ordinary, writing $m=k+(p+1)(t(p-1)-1)$, since $k>1$, we apply case a) of Thm. \ref{arranque} having 
$$
\overline{\rho}|_{I_p}\cong\left(
\begin{array}{cc}
\omega^{k-1}& *\\
0 & 1
\end{array}
\right).
$$
Case 1.1: if $*\neq 0$ we are in case 1) of Thm. \ref{savitt}. Indeed: first of all, $\overline{\rho}$ clearly has trivial endomorphisms. Secondly, for our choice it is $j\equiv-1\pmod{p-1}$ hence $j+1\equiv 0\pmod {p-1}$, which is compatible with this case. Since $k>2$ we do not have to worry about the \textit{peu ramifi\'e} case in Thm. \ref{savitt}. Hence $\mu_{gal}(2,\tau,\overline{\rho})=1$ in this case.

Case 1.2: if $*=0$, then it's not true that $\overline{\rho}$ has trivial endomorphisms and in principle we cannot use \cite{sa}. The strategy here is as follows: first, observe that in this situation our residual representation $\overline{\rho}$ has the form $\chi_1\oplus\chi_2$ where $\chi_1\chi_2^{-1}\neq \omega^{\pm}$ for otherwise it would be either $k=2$, excluded by assumption, or $k\equiv 0,2\pmod {p-1}$, hence either $p\leq k-1$ or $p\leq k+1$, also contradicting our hypothesis. Now, we apply Remark 5.7 of \cite{hutan}:

Let $\overline{\rho_1}$ (resp. $\overline{\rho_2}$) be the unique non-split extension of $1$ by $\omega^{k-1}$ (resp. of $\omega^{k-1}$ by $1$), so that 
$$
\overline{\rho_1}|_{I_p}\cong\left(\begin{array}{cc}\omega^{k-1} & *\\0 & 1\end{array}\right)\mbox{ and }\overline{\rho_2}|_{I_p}\cong\left(\begin{array}{cc}1 & *\\0 & \omega^{k-1}\end{array}\right)\mbox{ with }*\neq 0.
$$
By Thm. \cite{sa} and the former Case 1.1 we know that $\mu_{gal}(2,\tau,\overline{\rho_1})=1$. For a residual representation $\psi:G_p\to\mathrm{GL}_2(\overline{\mathbb{F}}_p)$, in the same vein as in Def. \ref{rtype}, let us define $R^{ver}(2,\tau,\psi)$ to be $\{0\}$ if it does not exist any deformation of $\overline{\psi}$ of type $(k,\tau)$ and otherwise, $R^{ver}(2,\tau,\psi):=R^{ver}(\psi)/\cap\frak{p}$ where the intersection runs over all the primes $\frak{p}$ of type $(k,\tau)$, namely those primes of the form $\frak{p}=Ker(\iota)$ where $\iota: R(\overline{\psi})^{ver}\to R$, where $\iota$ is induced by a deformation $\rho$ of $\overline{\psi}$ of type $(k,\tau)$ and $R$ is a $p$-adic ring.

In particular, since for $i=1,2$ there exists $R(2,\tau,\overline{\rho_i})$ and $R(\overline{\rho_i})\cong R^{ver}(\overline{\rho_i})$, it is also $R(2,\tau,\overline{\rho_i})\cong R^{ver}(2,\tau,\overline{\rho_i})$. Now Rem. 5.7 of \cite{hutan} gives
$$
e(R^{ver}(2,\tau,\overline{\rho}))\geq e(R^{ver}(2,\tau,\overline{\rho_1}))=e(R(2,\tau,\overline{\rho_1}))=\mu_{gal}(2,\tau,\overline{\rho_1})=1.
$$
Since $R^{\square}(\overline{\rho})$ is formally smooth over $R^{ver}(\overline{\rho})$, then $R^{\square}(2,\tau,\overline{\rho})$ is formally smooth over $R^{ver}(2,\tau,\overline{\rho})$ and we can grant the existence of a framed deformation of $\overline{\rho}$ satisfying conditions a) and b) of Remark 1.5 and hence factoring by $R(2,\tau,\overline{\rho})$. We cannot grant in this case that $\mu_{gal}(2,\tau,\overline{\rho})=1$ but the existence of this framed deformation of type $(2,\tau)$ is enough for the purpose of our proof.

Case 2: if $\rho_{f,\frak{P}}|_{G_{\frak{p}}}$ is not ordinary, by case 2) of Thm. \ref{arranque}, we have $\overline{\rho}|_{I_p}\cong \omega_2^{k-1}\oplus \omega_2^{p(k-1)}$. Since $p+1\nmid k$ and $m=k-1-p+t(p^2-1)>0$, we have that $p+m\equiv k-1\pmod{p^2-1}$ and $1+pm\equiv p(k-1)\pmod{p^2-1}$, hence $\overline{\rho}_{I_p}$ is as in case 2) of Thm. \ref{savitt}, hence also in this case we have $\mu_{gal}(2,\tau,\overline{\rho})=1$.

All told, in the three cases, there exists a deformation $\rho:G_p\to\mathrm{GL}_2(\mathcal{O}_{\frak{P}})$ of $\overline{\rho}$ of type $(2,\tau)$ with $\tau\cong \tilde{\omega}_2^m\oplus\tilde{\omega}_2^{pm}$ a supercuspidal inertial type and  such that $\tau(\rho)\cong \tau$, its Hodge-Tate weight is $\{0,1\}$ and $\mathrm{det}(\rho)=\omega^{k-1}\epsilon$, with $\epsilon$ of finite order and $\overline{\epsilon}=1$. 
\end{proof}

As a particular case of this, we have:

\begin{prop}Let $f\in S_k(\Gamma_0(N))$ be a normalised eigenform with ring of coefficients $\mathcal{O}$ with $k>2$ even. Then, for each prime $p>k+1$ such that $p\nmid N$, there exists $m>0$, a prime $\frak{P}$ of $\mathcal{O}$ over $p$ and a potentially crystalline deformation $\rho:G_p\to\mathrm{GL}_2(\mathcal{O}_{\frak{P}})$ of $\overline{\rho}_{f,\frak{P}}|_{G_p}$ of type $(2,\tau)$ where $\tau\cong \tilde{\omega}_2^m\oplus\tilde{\omega}_2^{pm}$ is a supercuspidal inertial type. Moreover, $m$ can be taken to be equal to $k+(p+1)(p-2)$. 
\label{firstliftclassical}
\end{prop}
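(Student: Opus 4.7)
The plan is to observe that Proposition \ref{firstliftclassical} is the $F=\mathbb{Q}$ specialization of Proposition \ref{firstlift}, and simply to redo the same argument replacing Theorem \ref{arranque} by its classical counterpart Theorem \ref{arranqueclassic}. Concretely, set $\overline{\rho}:=\overline{\rho}_{f,\frak{P}}|_{G_p}$, choose $m:=k-1-p+t(p^2-1)$ with $t\geq\lceil (p+1-k)/(p^2-1)\rceil$ so that $m>0$, and note that since $p>k+1$ we have $p+1\nmid k$ and therefore $p+1\nmid m$. The inertial type $\tau:=\tilde{\omega}_2^m\oplus\tilde{\omega}_2^{pm}$ is then supercuspidal, and writing $m=k+(p+1)(t(p-1)-1)=i+(p+1)j$ in the notation of Theorem \ref{savitt}, one obtains $j\equiv -1\pmod{p-1}$ and $i=k$, exactly as in the Hilbert case.

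The remainder of the argument splits along the same three cases. First, if $f$ is $p$-ordinary with non-trivial upper-right entry in $\overline{\rho}|_{I_p}$, Theorem \ref{arranqueclassic} (a) gives the shape required to apply case $(1)$ of Theorem \ref{savitt}, so $\mu_{gal}(2,\tau,\overline{\rho})=1$; since $k>2$ we need not worry about the \textit{peu ramifi\'e} subtleties. Second, if $f$ is $p$-ordinary but the upper-right entry vanishes (so $\overline{\rho}$ does not have trivial endomorphisms), then as in Case 1.2 of the proof of Proposition \ref{firstlift}, $\overline{\rho}\cong\chi_1\oplus\chi_2$ with $\chi_1\chi_2^{-1}\neq\omega^{\pm 1}$ (otherwise we would contradict $p>k+1$), and we appeal to Remark 5.7 of \cite{hutan} applied to the non-split extensions $\overline{\rho_1},\overline{\rho_2}$ to conclude $e(R^{ver}(2,\tau,\overline{\rho}))\geq 1$; formal smoothness of $R^{\square}(\overline{\rho})$ over $R^{ver}(\overline{\rho})$ then yields a framed deformation of type $(2,\tau)$. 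Third, if $f$ is not $p$-ordinary, Theorem \ref{arranqueclassic} (b) places $\overline{\rho}|_{I_p}$ in case $(2)$ of Theorem \ref{savitt} once one checks the congruences $p+m\equiv k-1$ and $1+pm\equiv p(k-1)\pmod{p^2-1}$, again giving $\mu_{gal}(2,\tau,\overline{\rho})=1$.

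In all cases one produces a deformation $\rho:G_p\to\mathrm{GL}_2(\mathcal{O}_{\frak{P}})$ of $\overline{\rho}_{f,\frak{P}}|_{G_p}$ of type $(2,\tau)$ with $\tau(\rho)\cong\tau$, Hodge--Tate weights $\{0,1\}$, and determinant of the prescribed form. The choice $t=p-1$ yields $m=k-1-p+(p-1)(p+1)^2\cdot\frac{1}{p+1}$ -- more cleanly, reorganising as $m=k+(p+1)(p-2)$ confirms the explicit value of $m$ in the statement. I do not anticipate any genuine obstacle beyond the case analysis above: the only real subtlety is Case 1.2, where trivial endomorphisms fail and one must detour through the versal (rather than universal) deformation ring via \cite{hutan}, but this is handled verbatim as in the Hilbert setting since the local group $G_p$ and the local residual representation are formally identical to $G_{\frak{p}}$ with $\frak{p}|p$ totally split.
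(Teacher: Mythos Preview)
Your proposal is correct and matches the paper's approach exactly: the paper presents Proposition~\ref{firstliftclassical} simply as the $F=\mathbb{Q}$ special case of Proposition~\ref{firstlift} with no separate proof, and your write-up is precisely the specialization of that argument with Theorem~\ref{arranqueclassic} replacing Theorem~\ref{arranque}. One small arithmetic slip: the explicit value $m=k+(p+1)(p-2)$ corresponds to $t=1$ (since $k-1-p+(p^2-1)=k+p^2-p-2=k+(p+1)(p-2)$), not $t=p-1$; your closing sentence should be amended accordingly.
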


From now on, we will restrict ourselves to the case of Hilbert modular forms and only at the end of the article we will remark that our proof also applies to the classical modular case, which we will use to prove the corrigenda mentioned in the introduction.

For the rest of this section, we fix a prime $p>\max\{k+1,6\}$ such that $p\nmid N$, a prime $\frak{P}$ of $\mathcal{O}_f$, a prime $\frak{p}$ of $F$ both over $p$ and a potentially crystalline $p$-supercuspidal lift $\rho$ of type $(2, \tau)$ of $\overline{\rho} := \overline{\rho}_{f,\frak{P}}|_{G_{\frak{p}}}$ as the one constructed in the previous proposition. For such a fixed prime $p$, let 
$R^{\square}(\overline{\rho})_{crys}$ be the quotient of $R^{\square}(\overline{\rho})$ corresponding to potentially crystalline framed deformations of $\overline{\rho}$ and let $R^{\square,\mu}(\overline{\rho})_{crys}\subseteq R^{\square}(\overline{\rho})_{crys}$ be the quotient of $R^{\square}(\overline{\rho})$ corresponding to potentially crystalline frame deformations of $\overline{\rho}$ of weight $2$ and determinant $\mu=\omega\epsilon$ equal to the determinant of $\rho$. 

\begin{prop}The quotient ring $R^{\square,\mu}(\overline{\rho})_{crys}$ is well defined and non-zero. Moreover $\mathrm{dim}_{\mathcal{O}}(R^{\square,\mu}(\overline{\rho})_{crys})=4$, where dim denotes the Krull dimension.
\label{krull}
\end{prop}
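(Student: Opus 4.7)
The plan is to (i) verify that the quotient $R^{\square,\mu}(\overline{\rho})_{crys}$ is well defined and to exhibit a characteristic-zero point to show it is non-zero, and (ii) compute its Krull dimension by invoking Kisin's dimension formula for framed potentially semistable deformation rings.

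For (i), I would note first that the framed deformation ring $R^{\square}(\overline{\rho})$ exists unconditionally (no trivial-endomorphism hypothesis is needed). Kisin's construction of potentially semistable framed deformation rings then guarantees that the quotient cutting out framed potentially crystalline deformations with Hodge--Tate weights $\{0,1\}$ and fixed determinant $\mu=\omega\epsilon$ is itself a well-defined complete Noetherian local $\mathcal{O}$-algebra. Non-vanishing is immediate from Proposition \ref{firstlift}: the local lift $\rho:G_{\frak{p}}\to\mathrm{GL}_2(\mathcal{O}_{\frak{P}})$ constructed there is potentially crystalline with Hodge--Tate weights $\{0,1\}$ and determinant $\omega^{k-1}\epsilon'=\omega\epsilon=\mu$ (with notation matching the discussion right before Section 3.1). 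Any choice of basis realises $\rho$ as a framed deformation, so the classifying map $R^{\square}(\overline{\rho})\to\mathcal{O}_{\frak{P}}$ factors through $R^{\square,\mu}(\overline{\rho})_{crys}$, witnessing that this quotient is non-zero.

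For (ii), since $p$ is totally split in $F$ we have $F_{\frak{p}}\cong\mathbb{Q}_p$, so the problem reduces to two-dimensional representations of $G_{\mathbb{Q}_p}$ with distinct regular Hodge--Tate weights $\{0,1\}$ and a fixed determinant. I would cite Kisin's dimension theorem for such framed potentially semistable deformation rings, which asserts that the generic fiber $R^{\square,\mu}(\overline{\rho})_{crys}[1/p]$, non-empty by step (i), is equidimensional of dimension $3$ over $\mathrm{Frac}(\mathcal{O})$. Combined with the $\mathcal{O}$-flatness clause of the same theorem, one concludes total Krull dimension $3+1=4$.

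The most delicate point is Case 1.2 of the proof of Proposition \ref{firstlift}, in which $\overline{\rho}$ fails to have trivial endomorphisms and the \emph{universal} deformation ring need not exist. This is not a genuine obstruction here, however: the framed ring $R^{\square}(\overline{\rho})$ always exists, and both Kisin's existence result for the potentially crystalline quotient and his dimension formula are formulated precisely in the framed setting, so well-definedness, non-vanishing and the dimension calculation carry through uniformly in all three cases of Proposition \ref{firstlift}.
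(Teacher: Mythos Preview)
Your overall strategy is essentially the paper's: show non-vanishing by exhibiting the lift $\rho$ of Proposition~\ref{firstlift} as a characteristic-zero point, then appeal to the structure theory of framed potentially crystalline deformation rings for $\mathcal{O}$-flatness and the generic-fibre dimension. The paper cites \cite{allen} Cor.~3.3.3 together with \cite{eg} Lemma~4.3.1 and \cite{kwannals} Prop.~2.12 where you cite Kisin directly, but the content is the same.

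There is, however, a numerical slip in your step~(ii). Kisin's formula gives the generic fibre of the framed potentially crystalline ring with regular Hodge type dimension $n^2 + [K:\mathbb{Q}_p]\,\tfrac{n(n-1)}{2}$ \emph{without} a determinant condition; imposing the fixed determinant $\mu$ cuts this by exactly one (this is precisely the Emerton--Gee lemma the paper invokes, which says the unconstrained ring is a power-series ring in one variable over the fixed-determinant one). For $n=2$ and $F_{\mathfrak{p}}\cong\mathbb{Q}_p$ one obtains $4+1-1=4$, not $3$, and hence total Krull dimension $5$, not $4$. You still land on the number $4$ only because of a second, compensating error: in this paper $\dim_{\mathcal{O}}$ denotes the \emph{relative} dimension over $\mathcal{O}$ (equivalently, by $\mathcal{O}$-flatness, the dimension of the generic fibre), not the absolute Krull dimension. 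Indeed the paper's own proof concludes with the identification
\[
\dim_{\mathcal{O}}\bigl(R^{\square,\mu}(\overline{\rho})_{crys}\bigr)=\dim\bigl(R^{\square,\mu}(\overline{\rho})_{crys}[1/p]\bigr)=4,
\]
and the subsequent computation of the relative dimension $3|S|$ of $R^{\square,loc,\mu}_S$ only balances if the local contributions $4$, $3$, $2$ at primes above $p$, at primes away from $p\infty$, and at archimedean places are all read as relative dimensions.
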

\begin{proof}  

The ring is non-zero since it contains in particular the lift $\rho$ just described. As for its dimension, Corollary 3.3.3 of \cite{allen} ensures that $R^{\square,\mu}(\overline{\rho})_{crys}$ is $\mathcal{O}$-flat, reduced, and that $\mathrm{dim}(R^{\square,\mu}(\overline{\rho})_{crys}[1/p])=4$. 

Now, reasoning as in \cite{eg} Lemma 4.3.1 we see that $R^{\square}(\overline{\rho})_{crys}$ is a power series ring in one variable over $R^{\square,\mu}(\overline{\rho})_{crys}$, thus $R^{\square,\mu}(\overline{\rho})_{crys}$ is $\mathcal{O}$-flat if and only if $R^{\square}(\overline{\rho})_{crys}$ is so, as is the case. Now, by the same argument as in the proof of \cite{kwannals} Prop. 2.12: 
$$
\mathrm{dim}_{\mathcal{O}}(R^{\square,\mu}(\overline{\rho})_{crys})=\mathrm{dim}(R^{\square,\mu}(\overline{\rho})_{crys}[1/p])=4.
$$
\end{proof}

\subsection{The global deformation ring $\mathcal{R}_{global}$}

Let $\frak{P}$ be a fixed prime of $\mathcal{O}_f$ over $p$ and denote by $\mathcal{O}$ the ring of integers of $K_{f,\frak{P}}$. This subsection follows the same strategy and steps as Section 3.2 of \cite{blancodieulefait}, the only difference being that at our prime $\frak{p}$ of $F$ over $p$, our local deformation ring corresponds to $\frak{p}$-supercuspidal deformations of Galois type $(2,\tau)$ instead of potentially diagonalizable ones. For that reason, we have omitted most of the details in some proofs, referring the reader to the corresponding sections therein. 

For each place $\nu$ of $F$ (finite or not) and for every $\frak{P}$ prime in $\mathcal{O}_f$ above $p$, let us denote by $\overline{\rho}_{\nu}$ the restriction $\overline{\rho}_{f,\frak{P}}|_{G_{\nu}}$.

\begin{defn}Let $S=\left\{\nu\mid \frak{N}, \nu\mbox{ prime in }F\right\}\cup\left\{\nu|p\mbox{ prime in }F\right\}\cup\{\nu|\infty\mbox{ archimedean prime}\}$ and for each $\nu\in S$, define the following ring $R^{\square,\mu}_{\mathcal{O},\nu}$:
\begin{itemize}
\item for $\nu=\frak{p_i}|p $,  define $R^{\square,\mu}_{\mathcal{O},\nu}$ 
 to be the irreducible component of $R^{\square,\mu}(\overline{\rho}_{\nu})_{crys}$ containing the $\frak{p}$-supercuspidal weight $2$ lift $\rho$ constructed using proposition \ref{firstlift}. We know from proposition \ref{krull} that this ring has Krull dimension $4$. 
 Due to \cite{ki3} Theorem 2.7.6 (see also \cite{gee}  page 25), the Galois type is constant in $R^{\square,\mu}_{\mathcal{O},\nu}$, so any lift corresponding to a $p$-adic point of this ring will be potentially crystalline, $\frak{p}$-supercuspidal and of type $(2, \tau)$.
\item for $\nu|\infty$ an archimedean prime, define $R^{\square,\mu}_{\mathcal{O},\nu}$ to be framed universal deformation ring for $\overline{\rho}_{\nu}$ corresponding to odd deformations of $\overline{\rho_{f,\frak{P}}}$ with determinant $\mu$. 
\item for $\nu\nmid p,\infty$, if $\overline{\rho}_{\nu}$ is not the twist of a semistable representation, define $R^{\square,\mu}_{\mathcal{O},\nu}$ to be the framed universal deformation ring corresponding to deformations $\rho$ such that $\rho(I_{\nu})$ is finite and $\mathrm{det}(\rho)=\mu$. 
\item finally, for $\nu\nmid p,\infty$,  if $\overline{\rho}_{\nu}$ is the twist of a semistable representation, define $R^{\square,\mu}_{\mathcal{O},\nu}$ to be the framed universal deformation ring corresponding to semistable lifts of determinant $\mu$.
\end{itemize}
\label{defnlocal}
\end{defn}

We know that $\mathrm{dim}_{\mathcal{O}}(R^{\square, \mu}_{\mathcal{O},\nu})=4$ for $\nu\mid p$, and we can put all the dimensions of the former rings in the following proposition, for a proof of the second and third statement of which we refer to \cite[Prop. 3.3]{blancodieulefait}:
\begin{prop}The rings introduced in \ref{defnlocal} are finite $\mathcal{O}$-dimensional and $\mathcal{O}$-flat. Their dimensions are:
\begin{itemize}
\item[1. ] $\mathrm{dim}_{\mathcal{O}}(R^{\square, \mu}_{\mathcal{O},\nu})=4$, for each $\nu\mid p$
\item[2. ] for $\nu\nmid p,\infty$, $\mathrm{dim}_{\mathcal{O}}(R^{\square, \mu}_{\mathcal{O},\nu})=3$,
\item[3. ] $\mathrm{dim}_{\mathcal{O}}(R^{\square, \mu}_{\mathcal{O},\nu})=2$ for each $\nu\mid\infty$.
\end{itemize}
\end{prop}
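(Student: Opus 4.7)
The plan is to handle the three cases separately, noting that $\mathcal{O}$-flatness and finiteness over $\mathcal{O}$ in every case flow from the general theory of framed universal deformation rings (Kisin).

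For claim (1), nearly all the work has already been done in Proposition \ref{krull}: the ambient ring $R^{\square,\mu}(\overline{\rho}_\nu)_{crys}$ is $\mathcal{O}$-flat of Krull dimension $4$. Since $R^{\square,\mu}_{\mathcal{O},\nu}$ was defined as the irreducible component containing the $\frak{p}$-supercuspidal lift $\rho$ of type $(2,\tau)$ produced in Proposition \ref{firstlift}, my plan is to invoke equidimensionality of the potentially crystalline framed deformation ring at fixed Hodge--Tate weight $\{0,1\}$ and fixed inertial type. This follows from the constancy of the Galois type on irreducible components, which is exactly Theorem 2.7.6 of \cite{ki3}, already cited in Definition \ref{defnlocal}. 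Every irreducible component of the chosen connected piece then inherits dimension $4$.

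For claims (2) and (3), my plan is to import the arguments of \cite[Prop.\ 3.3]{blancodieulefait}, which proves precisely these two dimension formulas in a closely parallel setting. The only structural difference between the present setup and that of \cite{blancodieulefait} is the supercuspidal local condition at primes above $p$, which is immaterial for $\nu\nmid p,\infty$ and for $\nu\mid\infty$. In outline: at finite $\nu\nmid p$, the deformation ring is controlled by a tangent--obstruction computation of the Galois cohomology of $\mathrm{ad}^0\overline{\rho}_\nu$ and turns out to have relative dimension $3$ in both the inertia-rigid and the twisted-semistable cases; at an archimedean $\nu$, using that $p>2$, $F$ is totally real, and the determinant is fixed, one obtains dimension $2$ from the standard framed count $3-\dim_k H^0(G_\nu,\mathrm{ad}^0\overline{\rho}_\nu)=2$ for an odd residual representation.

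The step I expect to be the principal obstacle is making sure that the irreducible component picked out in (1) indeed attains the full dimension $4$, rather than a smaller one, since a priori only the ambient crystalline ring is known to be equidimensional. The resolution is that the inertial type is locally constant on irreducible components, so every component consisting of type-$(2,\tau)$ lifts lies inside the top-dimensional equidimensional piece, forcing $\dim_{\mathcal{O}} R^{\square,\mu}_{\mathcal{O},\nu}=4$. Once this is in place, assembling the three claims is essentially bookkeeping on top of existing machinery, and the proposition reduces to pointing to Proposition \ref{krull} for (1) and to \cite[Prop.\ 3.3]{blancodieulefait} for (2) and (3).
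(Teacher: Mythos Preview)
Your proposal is correct and follows essentially the same route as the paper: for (1) the paper simply points back to Proposition~\ref{krull} (together with the remark in Definition~\ref{defnlocal} that the chosen irreducible component has Krull dimension $4$), and for (2) and (3) it refers the reader to \cite[Prop.~3.3]{blancodieulefait}. Your added discussion of equidimensionality is a reasonable elaboration of a point the paper leaves implicit (it is contained in the cited result of Allen), but otherwise the arguments coincide.
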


The next step is to \textit{paste together} the previous rings in the following sense:
\begin{prop}[\cite{blancodieulefait} Proposition 3.5] The ring $R^{\square, loc,\mu}_S:=\displaystyle\widehat{\otimes}_{\nu\in S}R_{\mathcal{O},\nu}^{\square,\mu}$ is flat over the ring of integers of a finite extension of $\mathbb{Q}_p$ and has relative dimension $3|S|$.
\label{dim1}
\end{prop}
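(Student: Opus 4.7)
The strategy is to reduce the proposition to the purely commutative-algebra statement that $\widehat{\otimes}_\mathcal{O}$ of $\mathcal{O}$-flat CNL-$\mathcal{O}$-algebras is again $\mathcal{O}$-flat and its relative Krull dimension is additive in the factors; the arithmetic of the sum then forces the equality $3|S|$ using the specific geometry of $F$. In particular, this is an almost mechanical adaptation of Proposition 3.5 of \cite{blancodieulefait}: only the local constituent at the primes above $p$ has been replaced (supercuspidal of type $(2,\tau)$ instead of potentially diagonalisable), but its relative dimension remains $4$ by Proposition \ref{krull}, so the bookkeeping is unchanged.

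First, I would record flatness. Each $R^{\square,\mu}_{\mathcal{O},\nu}$ is $\mathcal{O}$-flat: for $\nu\mid p$ this follows from the proof of Proposition \ref{krull} (via Cor. 3.3.3 of \cite{allen}); for the remaining local types (unramified/finite image at $\nu\nmid p,\infty$, semistable when $\overline{\rho}_\nu$ is a twist of a semistable one, and odd with fixed determinant at archimedean places) the flatness statements are standard. Choosing presentations $R^{\square,\mu}_{\mathcal{O},\nu}\cong \mathcal{O}[[x_{\nu,1},\ldots,x_{\nu,a_\nu}]]/(f_{\nu,1},\ldots,f_{\nu,b_\nu})$, the completed tensor product is presented by putting all the variables and all the relations together, and from this presentation $\mathcal{O}$-flatness of the tensor product is immediate.

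Second, I would compute the dimension. Working on the generic fibre $(-)[1/p]$, and using that each factor is $\mathcal{O}$-flat, the Krull dimension of the completed tensor product over $\mathcal{O}$ is the sum of the relative dimensions of the factors. Invoking the preceding proposition we obtain
$$
\mathrm{dim}_{\mathcal{O}}\bigl(R^{\square,loc,\mu}_S\bigr)=4\,|S_p|+3\,|S_{\mathrm{fin},\nmid p}|+2\,|S_\infty|,
$$
where $S_p$, $S_{\mathrm{fin},\nmid p}$, $S_\infty$ denote respectively the places of $F$ over $p$, the finite places not over $p$, and the archimedean ones. Since $F$ is totally real one has $|S_\infty|=[F:\mathbb{Q}]$, and since $p$ is totally split in $F$ one has $|S_p|=[F:\mathbb{Q}]$, so the two cardinalities coincide. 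Substituting $|S_p|=|S_\infty|$ collapses the sum to
$$
3|S_p|+3|S_{\mathrm{fin},\nmid p}|+3|S_\infty|=3|S|,
$$
which is the desired relative dimension.

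The main obstacle I anticipate is being fully rigorous about the additivity of relative Krull dimension under the completed tensor product; the cleanest route is to invoke the presentations above together with the $\mathcal{O}$-flatness of each factor, so that inverting $p$ commutes with $\widehat{\otimes}_{\mathcal{O}}$ and one may apply the standard formula $\dim(A\otimes_{K}B)=\dim A+\dim B$ on the generic fibres (where $K$ is the fraction field of $\mathcal{O}$). Everything else is bookkeeping: the key arithmetic input is the coincidence $|S_p|=|S_\infty|=[F:\mathbb{Q}]$ that rescues the uniform coefficient $3$ in the final expression.
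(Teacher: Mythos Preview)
Your proposal is correct and follows essentially the same approach as the paper: both arguments sum the local relative dimensions ($4$ at each of the $n=[F:\mathbb{Q}]$ places above $p$, $2$ at each of the $n$ archimedean places, $3$ at the remaining $|S|-2n$ finite places) and use the coincidence $|S_p|=|S_\infty|=n$ to collapse $4n+2n+3(|S|-2n)$ to $3|S|$. The paper's proof is terser and omits the discussion of flatness and additivity under $\widehat{\otimes}_{\mathcal{O}}$ that you spell out, but the structure is identical.
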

\begin{proof}For primes $\nu\mid p$, the rings $R^{\square, \mu}_{\mathcal{O},\nu}$ contribute $4n$, where $n=[F:\mathbb{Q}]$. For primes $\nu|\infty$, the rings $R^{\square, \mu}_{\mathcal{O},\infty}$ contribute $2n$. Finally, for primes $\nu\nmid p,\infty$, the rings $R^{\square, \mu}_{\mathcal{O},\nu}$ contribute $3(|S|-2n)$, hence the result follows.
\end{proof}
Next, define 
\begin{equation}
\widehat{R}_S^{\square, loc}:=\widehat{\otimes}_{\nu\in S} R_{\nu}^{\square,\mu},
\end{equation}
where $R_{\nu}^{\square, \mu}$ denotes the framed deformation ring of $\overline{\rho}_{\nu}$ representing the functor assigning to a CNL $\mathcal{O}$-algebra A the isomorphism classes of lifts of $\overline{\rho}_{\nu}$ to $\mathrm{GL}_2(A)$ with determinant $\mu$. 

Furthermore, let now $R^{\square, \mu}_{\mathcal{O},S}$ be the universal ring representing the functor assigning a CNL $\mathcal{O}$-algebra $A$ the set of isomorphism classes of pairs $ (\rho_A,\{\beta_{A,\nu}\}_{\nu\in S})$ where $\rho_A$ is a deformation of $\overline{\rho}_{f,\frak{P}}$ to $\mathrm{GL}_2(A)$ with $\det(\rho_A)=\mu$ unramified outside $S$.  
 As proved in \cite{blancodieulefait} Proposition 3.6, the ring $R^{\square, \mu}_{\mathcal{O},S}$ is an $\widehat{R}^{\square, loc}_S$-algebra.
Finally, we present the global object we seek for:
\begin{defn}Denoting $\widehat{R}^{\square,\mu}_S:=R^{\square,\mu}_{\mathcal{O},S}\widehat{\otimes}_{\widehat{R}_S^{\square,loc}}R^{\square,loc,\mu}_S$, let us define $\mathcal{R}_{global}$ as the image of the unframed universal deformation $R_S^{\mu}$ in $\widehat{R}^{\square,\mu}_S$ induced by the canonical map $R_S\to R^{\square,\mu}_{\mathcal{O},S}$. Notice that $R^{\mu}_S$ is well defined since $\mathrm{SL}_2(\mathbb{F}_p)\subseteq\mathrm{Im}(\overline{\rho}_{f,\frak{p}})$ so that in particular this residual representation is absolutely irreducible.
\label{globalring}
\end{defn}

Since for every $\nu\nmid p$ the ring $R^{\square, \mu}_{\mathcal{O},\nu}$ has been defined as in \cite{kwannals}, the proof of the next result is the same as that of \cite[Prop. 3.9]{blancodieulefait}:
\begin{prop}The ring $\widehat{R}^{\square,\mu}_S$ is a power series ring over $\mathcal{R}_{global}$ in $4|S|-1$ variables.
\label{dim2}
\end{prop}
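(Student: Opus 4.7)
The approach is to apply the standard framed-vs-unframed comparison in Kisin-style deformation theory, and to verify that the resulting formal smoothness (and its relative dimension) survives the base change which imposes the local type conditions.

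First, since $\mathrm{SL}_2(\mathbb{F}_p)\subseteq \mathrm{Im}(\overline{\rho}_{f,\frak{P}})$, the residual representation is absolutely irreducible, so Schur's lemma gives $\dim_k H^0(G_{F,S},\mathrm{ad}\,\overline{\rho}) = 1$, and the unframed universal deformation ring $R^{\mu}_S$ with fixed determinant exists. Kisin's framed-deformation formalism (cf. \cite{ki3}) then shows that the canonical map $R^{\mu}_S \to R^{\square,\mu}_{\mathcal{O},S}$ is formally smooth of relative dimension
$$
|S|\cdot \dim_k \mathrm{ad}\,\overline{\rho} \;-\; \dim_k H^0(G_{F,S},\mathrm{ad}\,\overline{\rho}) \;=\; 4|S|-1.
$$
Intuitively, $4|S|$ counts free tangent directions for the framings at each $\nu\in S$, and the $-1$ records the single direction absorbed by the global scalar centralizer of $\overline{\rho}$.

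Second, passing to $\widehat{R}^{\square,\mu}_S = R^{\square,\mu}_{\mathcal{O},S}\,\widehat{\otimes}_{\widehat{R}^{\square,loc}_S}\, R^{\square,loc,\mu}_S$ is a base change along the quotient $\widehat{R}^{\square,loc}_S \twoheadrightarrow R^{\square,loc,\mu}_S$, which imposes the prescribed local types at each $\nu\in S$. These conditions constrain the Galois-representation component of a framed deformation but not the framing variables themselves, so the same $4|S|-1$ framing directions remain free after base change. On the unframed side, by Definition \ref{globalring} the ring $\mathcal{R}_{global}$ is the image of $R^{\mu}_S$ in $\widehat{R}^{\square,\mu}_S$; functorially it classifies those deformation classes of $\overline{\rho}_{f,\frak{P}}$ whose restriction to each $G_{\frak{p}}$ admits a framing of the chosen local type. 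The canonical morphism $R^{\square,\mu}_{\mathcal{O},S}\to\widehat{R}^{\square,\mu}_S$ therefore descends to a formally smooth morphism $\mathcal{R}_{global}\to \widehat{R}^{\square,\mu}_S$ of the same relative dimension $4|S|-1$. Since a formally smooth local homomorphism of CNL-$\mathcal{O}$-algebras of relative dimension $r$ is (non-canonically) a power series ring in $r$ variables over its source, we conclude that
$$
\widehat{R}^{\square,\mu}_S \;\cong\; \mathcal{R}_{global}[[X_1,\ldots,X_{4|S|-1}]].
$$

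The main technical point is the second step: one must check that the base change on the framed side is compatible with the image construction on the unframed side, so that the framing/representation decomposition truly survives the passage to $\widehat{R}^{\square,\mu}_S$ with unchanged relative dimension. This amounts to unwinding the functorial interpretation of the local type conditions and invoking the exactness properties of the completed tensor product over $\widehat{R}^{\square,loc}_S$; as no step of that argument depends on the specific local conditions chosen (only on the fact that they constrain the representation, not the framing), the proof parallels verbatim the one given for \cite[Prop.~3.9]{blancodieulefait}, which treated the analogous situation with potentially diagonalizable local conditions in place of the $\frak{p}$-supercuspidal ones used here.
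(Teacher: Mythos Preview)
Your proposal is correct and follows essentially the same approach as the paper: the paper simply notes that, since the local rings at $\nu\nmid p$ are defined exactly as in \cite{kwannals}, the proof of \cite[Prop.~3.9]{blancodieulefait} goes through verbatim. Your write-up is a faithful expansion of that deferred argument, making explicit the Kisin framed-vs-unframed comparison and the observation that the local type conditions constrain only the representation and not the framing variables, so the relative dimension $4|S|-1$ is preserved under the base change defining $\widehat{R}^{\square,\mu}_S$.
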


\subsection{The dimension of $\mathcal{R}_{global}$}
For a number field $F$ and a finite set $S$ of places of $F$ possibly containing the archimedean ones, and given a $G_F$-module $M$ with unramified Galois action outside $S$, let us set $H^k(S,M):=H^k(G_{F,S},M)$. The usual adjoint representations are denoted $Ad:=Ad(\overline{\rho}_{f,\frak{P}})$, $Ad^0:=Ad^0(\overline{\rho}_{f,\frak{P}})$, $(Ad^0)^*:=\mathrm{Hom}_{\mathbb{F}}(Ad^0,\mathbb{F})$ and $(Ad^0)^*(1)=\mathrm{Hom}_{\mathbb{F}}(Ad^0,\mu_p^*)$ where $\mu_p$ denotes the multiplicative group of $p$-roots of unity. If for each $\nu\in S$ we are given a subspace $L_{\nu}\subseteq H^k(D_{\nu},M)$, we denote by $H^k_{\{L_{\nu}\}}(S,M)$ the preimage of $\prod_{\nu\in S}L_{\nu}\subseteq \prod_{\nu\in S}H^k(D_{\nu},M)$ under the restriction map $H^k(S,M)\to\prod_{\nu\in S}H^k(D_{\nu},M)$. 

Following \cite{blancodieulefait} again, let us define:
\begin{itemize}
\item for $M=Ad^0$ and $\nu\in S$, $L_{\nu}$ is the image of $H^0(D_{\nu}, Ad/Ad^0)$ in $H^1(D_{\nu},Ad^0)$.
\item for $M=Ad$, and $\nu\in S$, $L_{\nu}=\{0\}$. 
\end{itemize}
Observe that since $p>6$, $\mathrm{dim}(L_{\nu})=0$ for $M=Ad^0$. Further, we can write $Ad=Ad^0\oplus Z$ where $Z$ denotes the subspace of scalar matrices in $M_2(\mathbb{F}_p)$. 

Now, considering the natural exact sequence
$$
0\to H^0(S,Ad^0)\to H^0(S,Ad)\to \mathbb{F}\to H^1(S,Ad^0)\to H^1(S,Ad),
$$
we denote by $H^1(S,Ad^0)^{\eta}$ and $H^1_{\{L_{\nu}\}}(S,Ad^0)^{\eta}$ the images of $H^1(S,Ad^0)\to H^1(S,Ad)$ and $H^1_{\{L_{\nu}\}}(S,Ad^0)\to H^1(S,Ad)$ respectively. Then:
\begin{itemize}
\item[1. ] The surjections $H^1(S,Ad^0)\to H^1(S,Ad^0)^{\eta}$ and $H^1_{\{L_{\nu}\}}(S,Ad^0)\to H^1_{\{L_{\nu}\}}(S,Ad^0)^{\eta}$ are isomorphisms.
\item[2. ] $H^0(F,Ad^0)=H^0(F, (Ad^0)^{*}(1))=0$.
\item[3. ] $H^1(F,Z)\hookrightarrow H^1(F,Ad)$.
\end{itemize}

The following result is proved in Proposition 3.11 of \cite{blancodieulefait}:

\begin{prop}The minimal number of generators of $R^{\square, \mu}_{\mathcal{O},S}$ (analogously $\widehat{R}^{\square, \mu}_S$) over $\widehat{R}_S^{\square,loc, \mu}$ (analogously $R^{\square,loc, \mu}_S$) is 
$$
g:=\mathrm{dim}_{\mathbb{F}}(H^1_{\{L_{\nu}\}}(S,Ad^0))+\sum_{\nu\in S}\mathrm{dim}_{\mathbb{F}}(H^0(D_{\nu},Ad))-\mathrm{dim}_{\mathbb{F}}(H^0(F,Ad)).
$$
\label{ngen}
\end{prop}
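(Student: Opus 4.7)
The plan is to reduce the statement to a standard Galois cohomology computation via Nakayama's lemma applied to the relative tangent space, closely following the proof of \cite[Prop. 3.11]{blancodieulefait}. By Nakayama, the minimal number of generators of $R^{\square,\mu}_{\mathcal{O},S}$ as a $\widehat{R}_S^{\square,loc,\mu}$-algebra equals the $\mathbb{F}$-dimension of the relative cotangent (equivalently, tangent) space, which classifies framed global deformations of $\overline{\rho}_{f,\frak{P}}$ to the dual numbers $\mathbb{F}[\epsilon]/(\epsilon^2)$ with determinant $\mu$, modulo those that factor through the prescribed local framed deformation functors encoded by the $R^{\square,\mu}_{\mathcal{O},\nu}$.

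First I would set up the standard framed-versus-unframed bookkeeping at each $\nu \in S$: the framed local tangent at $\nu$ contributes $\dim Ad - \dim H^0(D_\nu, Ad)$ beyond the local deformation condition cut out by $L_\nu$, while the unframed global tangent is $H^1(G_{F,S}, Ad^0)$ with local images controlled by the $L_\nu$. Combining these via the long exact sequence arising from restriction to the places in $S$, and taking into account the quotient by the diagonal conjugation action whose stabilizer is $H^0(F, Ad)$, yields the identification
$$
\dim_{\mathbb{F}} \mathrm{tan}_{\widehat{R}_S^{\square,loc,\mu}}\!\bigl(R^{\square,\mu}_{\mathcal{O},S}\bigr) \;=\; \dim_{\mathbb{F}} H^1_{\{L_\nu\}}(S, Ad^0) + \sum_{\nu \in S} \dim_{\mathbb{F}} H^0(D_\nu, Ad) - \dim_{\mathbb{F}} H^0(F, Ad),
$$
which is exactly the stated value of $g$. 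The analogous formula for the pair $(\widehat{R}^{\square,\mu}_S, R^{\square,loc,\mu}_S)$ follows in the same way.

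Second, I would verify that the only divergence from \cite[Prop. 3.11]{blancodieulefait} is that at places $\nu \mid p$ the local ring $R^{\square,\mu}_{\mathcal{O},\nu}$ is now the irreducible component of the potentially crystalline deformation ring corresponding to a $\frak{p}$-supercuspidal inertial type $(2,\tau)$ rather than a potentially diagonalizable one. Since the cohomological count depends only on the existence of a well-defined local subspace $L_\nu \subseteq H^1(D_\nu, Ad^0)$ for each $\nu$, not on its particular geometric nature, the argument of \emph{loc. cit.} transfers verbatim. The main obstacle is the careful bookkeeping of frames, coboundary terms and stabilizers: one has to confirm that $H^0(D_\nu, Ad)$ and $H^0(F, Ad)$ enter with the signs and multiplicities shown above. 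Here the large image assumption $\mathrm{SL}_2(\mathbb{F}_p) \subseteq \mathrm{Im}(\overline{\rho}_{f,\frak{P}})$ forces $H^0(F, Ad^0) = 0$ and $H^0(F, Ad) = \mathbb{F}$, so the restriction map induces a clean identification of the unframed global tangent with the Selmer-type group $H^1_{\{L_\nu\}}(S, Ad^0)$ and no unexpected kernels appear, yielding the desired value of $g$.
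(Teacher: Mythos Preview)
Your proposal is correct and aligns with the paper's approach: the paper does not give an independent proof but simply states that the result is proved in \cite[Prop.~3.11]{blancodieulefait}, exactly the reference you invoke. Your additional sketch of the tangent-space computation and your remark that the change of local condition at $\nu\mid p$ does not affect the argument are accurate and in fact more detailed than what the paper provides.
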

We can now prove the main result of this section:
\begin{thm}Notations as before, we have that $\mathrm{dim}(\mathcal{R}_{global})\geq 1$.
\label{cotadimension}
\end{thm}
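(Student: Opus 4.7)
The plan is to prove $\dim(\mathcal{R}_{global}) \geq 1$ by combining the presentation of $\widehat{R}^{\square,\mu}_S$ afforded by Proposition \ref{ngen} with the standard dual-Selmer bound on the number of relations, and then unwinding the result with Propositions \ref{dim1} and \ref{dim2}.

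First, I would use Proposition \ref{ngen} to write
$$\widehat{R}^{\square,\mu}_S \;\cong\; R^{\square,loc,\mu}_S[[T_1,\ldots,T_g]]/J,$$
where $g$ is the number of generators from that proposition, and then bound the minimal number $h$ of generators of $J$. A standard obstruction-theoretic argument, as employed in \cite{blancodieulefait} following \cite{kwannals}, identifies the obstruction space with a dual Selmer group and yields
$$h \;\leq\; \dim_{\mathbb{F}} H^1_{\{L_\nu^\perp\}}\bigl(S,(Ad^0)^*(1)\bigr),$$
where $L_\nu^\perp\subseteq H^1(D_\nu,(Ad^0)^*(1))$ is the annihilator of $L_\nu$ under local Tate duality.

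Next, I would apply the Greenberg-Wiles formula to compute the difference $\dim H^1_{\{L_\nu\}}(S,Ad^0) - \dim H^1_{\{L_\nu^\perp\}}(S,(Ad^0)^*(1))$ in terms of local invariants. Using the vanishings $H^0(F,Ad^0) = H^0(F,(Ad^0)^*(1)) = 0$ (property 2 of the bulleted list) together with $\dim L_\nu = 0$ for every $\nu\in S$ (the observation recorded just after the definition of $L_\nu$), this difference reduces to $-\sum_{\nu\in S}\dim H^0(D_\nu,Ad^0)$. Combining this with the decomposition $Ad = Ad^0 \oplus Z$, which gives $\dim H^0(D_\nu,Ad) = \dim H^0(D_\nu,Ad^0) + 1$ and $\dim H^0(F,Ad) = 1$, the formula for $g$ in Proposition \ref{ngen} simplifies to
$$g \;=\; \dim_{\mathbb{F}} H^1_{\{L_\nu^\perp\}}\bigl(S,(Ad^0)^*(1)\bigr) + |S| - 1.$$
Together with the bound on $h$, this gives $g - h \geq |S| - 1$.

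Finally, by Proposition \ref{dim1} we have $\dim R^{\square,loc,\mu}_S = 3|S| + 1$, so the presentation above yields
$$\dim \widehat{R}^{\square,\mu}_S \;\geq\; \dim R^{\square,loc,\mu}_S + g - h \;\geq\; 3|S| + 1 + (|S|-1) \;=\; 4|S|.$$
Proposition \ref{dim2} realises $\widehat{R}^{\square,\mu}_S$ as a power series ring over $\mathcal{R}_{global}$ in $4|S| - 1$ variables, whence $\dim \mathcal{R}_{global} = \dim \widehat{R}^{\square,\mu}_S - (4|S|-1) \geq 1$, as required. The principal obstacle is the upper bound on $h$ in the first step: identifying the obstruction module with a dual Selmer group demands that each local deformation ring from Definition \ref{defnlocal} (in particular the $\frak{p}$-supercuspidal component at primes above $p$, whose shape is controlled by Proposition \ref{krull}) fits into the axiomatic framework of \cite{kwannals}. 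Once this is checked, the rest is elementary dimension arithmetic.
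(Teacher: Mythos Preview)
Your proposal is correct and follows essentially the same route as the paper's own proof: both arguments combine the generator count of Proposition~\ref{ngen}, the Greenberg--Wiles formula (what the paper calls ``Wiles's Lemma'', citing \cite[Lem.~3.10 and 3.12]{blancodieulefait}) to obtain $g=\dim H^1_{\{L_\nu^\perp\}}(S,(Ad^0)^*(1))+|S|-1$, the dual-Selmer bound on the relations (cited from \cite[Lem.~3.14]{blancodieulefait}), and then Propositions~\ref{dim1} and~\ref{dim2} to conclude. Your ``principal obstacle'' is handled in the paper simply by reference to \cite{blancodieulefait}, so no new verification is needed here.
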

\begin{proof}First, from Proposition \ref{dim2} we have that
$$
\mathrm{dim}(\widehat{R}^{\square,\mu}_S)=\mathrm{dim}(\mathcal{R}_{global})+4|S|-1.
$$
Henceforth, it is enough to prove that $\mathrm{dim}(\widehat{R}^{\square,\mu}_S)\geq 4|S|$. To that end, we make use of Wiles's Lemma (see \cite[Lem. 3.10 and 3.12]{blancodieulefait}) to write:
$$
g=\mathrm{dim}(H^1_{\{L_{\nu}^*\}}(S,(Ad^0)^*(1)))-\mathrm{dim}(H^0(F,Ad))+\sum_{\nu\in S}\mathrm{dim}(H^0(D_{\nu},Ad))-\mathrm{dim}(H^0(D_{\nu,Ad^0})).
$$
Second, applying this equality to the natural exact sequence
$$
0\to H^0(D_{\nu},Ad^0)\to H^0(D_{\nu},Ad)\to\mathbb{F}\to 0,
$$
we obtain
\begin{equation}
g=\mathrm{dim}(H^1_{\{L_{\nu}^*\}}(S,(Ad^0)^*(1)))+|S|-1.
\label{ng}
\end{equation}
Since $R^{\square, \mu}_{\mathcal{O},S}$ has a presentation
$$
R^{\square, \mu}_{\mathcal{O},S}\cong \widehat{R}_S^{\square,loc, \mu}[[X_1,...,X_g]]/J 
$$
for some ideal $J$, let us set $r(J)$ to be the minimal number of generators of $J$, so that $r(J)=\mathrm{dim}(J/\frak{m}J)$, with $\frak{m}$ the maximal ideal of $\widehat{R}_S^{\square,loc, \mu}[[X_1,...,X_g]]$. In \cite[Lem. 3.14]{blancodieulefait} we prove that
$$
r(J)\leq \mathrm{dim}(H^1_{\{L_{\nu}^*\}}(S,(Ad^0)^*(1))).
$$ 
From this bound, we obtain another presentation
$$
\widehat{R}_S^{\square, \mu}\cong  R_S^{\square,loc, \mu}[[X_1,...X_g]]/J',
$$
for some other ideal $J'$ generated by at most $\mathrm{dim}_{\mathbb{F}}(H^1_{\{L_{\nu}^*\}}(S,(Ad^0)^*(1)))$ elements. Hence
$$
\mathrm{dim}(\widehat{R}_S^{\square, \mu})\geq \mathrm{dim}(R_S^{\square,loc, \mu})+g-\mathrm{dim}_{\mathbb{F}}(H^1_{\{L_{\nu}^*\}}(S,(Ad^0)^*(1))), 
$$
which due to Equation \ref{ng} is lower bounded by $\mathrm{dim}(R_S^{\square,loc, \mu})+|S|-1$, which equals $4|S|$, by Prop. \ref{dim1}.
\end{proof}

\subsection{The existence of modular $p$-supercuspidal lifts}
We start by recalling the following result, which will be applied in our proof:
\begin{thm}[\cite{gee}, Theorem 4.4.1] Let $F$ be an imaginary CM field and $F^{+}$ its maximal totally real subfield. Let $n\geq 1$ be an integer and $p>2(n + 1)$ an odd prime such that  $\zeta_p\not\in F$ and all primes of $F^{+}$ above $p$ split in $F$.  Let $S$ be a finite set of archimedean places of $F^{+}$, including all places above $p$, such that each place in $S$ splits in $F$. For each $\nu\in S$ choose a place $\tilde{\nu}$ of $F$ lying over $\nu$. Let $\mu$ be an algebraic character of $G_{F^{+}}$ and $\overline{r}:G_F \to GL_n(\bar{\mathbb{F}}_p)$ a continuous representation such that

\begin{itemize}
\item[1.] $ (\overline{r}, \overline{\mu})$ is a polarized mod $p$ representation unramified outside $S$, either ordinarily automorphic or potentially diagonalizably automorphic,
\item[2.] $\bar{r}|G_{F(\zeta_p)}$ is irreducible.
\end{itemize}

For each $\nu\in S$, let $\rho_{\nu} : G_{F_{\tilde{\nu}}} \to GL_n(\mathcal{O}_{\mathbb{Q}_p})$ be a lift of $\overline{r}|_{G_{F_{\tilde{\nu}}}}$. If $\nu|l$, assume further that $\rho_{\nu}$ is potentially diagonalizable, and that for all $\tau: F_{\tilde{\nu}}\to \overline{\mathbb{Q}}_p$, $HT_{\tau}(\rho_{\nu})$ consists of $n$ distinct integers.

Then there is a regular algebraic, cuspidal, polarized automorphic representation $(\pi, \chi)$ of $GL_n(\mathbb{A}_F )$ such that
\begin{itemize}
\item[(1)] $\overline{r_{p,\iota}(\pi)}\cong \bar{r}$;
\item[(2)] $r_{p,\iota}(\chi)\varepsilon_l^{1-n} = \mu$;
\item[(3)] $\pi$ has level potentially prime to $p$;
\item[(4)] $\pi$ is unramified outside $S$;
\item[(5)] for each $\nu\in S$ we have that $\rho_{\nu} $  connects to $ r_{p,\iota}(\pi)|_{G_{F_{\nu}}}.$
\item[(6)]  Suppose  that for all $\nu \mid l$ the lifts $\rho_{\nu} $ are crystalline. Then for all such $\nu$ the representation $ r_{p,\iota}(\pi)|_{G_{F_{\nu}}}$ is crystalline.
\item[(7)] Define the ring $\mathcal{R}_{F}$ as the universal deformation ring of $\bar{r}$ for the deformation problem with local conditions at primes $\nu$ dividing $p$ corresponding to fixing the irreducible component of the corresponding local deformation ring (of potentially crystalline representations with fixed Hodge-Tate weights) that contains $\rho_{\nu} $, and similarly for primes $\nu \in S$ not dividing $p$. Then $\mathcal{R}_{F}$ is a finitely generated $\mathcal{O}$-module and it has at least one point in $\overline{\mathbb{Z}}_p$, corresponding to the representation $r_{p,\iota}(\pi)$.

\end{itemize}
\label{geetool}
\end{thm}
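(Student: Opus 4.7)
The plan is to establish this automorphy lifting theorem via a Taylor--Wiles--Kisin patching argument in the potentially diagonalizable framework of Barnet-Lamb--Gee--Geraghty--Taylor. The first step is to set up the polarized global deformation ring $\mathcal{R}_F$ as described in item (7), parametrizing deformations of $\bar r$ unramified outside $S$, with similitude character lifting $\bar\mu$ and the prescribed local conditions: at each $v \in S$ above $p$, the lift is required to lie on the irreducible component of the framed potentially crystalline deformation ring (with the specified Hodge--Tate weights) that contains $\rho_v$; an analogous component condition is imposed at the remaining places of $S$. In parallel, one constructs a Hecke algebra $\mathbb{T}$ acting on a space of algebraic automorphic forms on a definite unitary similitude group attached to $F/F^{+}$, obtained by Clozel--Harris--Labesse descent from the automorphic witness for $\bar r$, and localizes at the maximal ideal corresponding to $\bar r$.

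The core step is the patching. The irreducibility of $\bar r|_{G_{F(\zeta_p)}}$ supplies the abundant set of Taylor--Wiles primes required. Patching the universal framed deformation ring and the automorphic module simultaneously over auxiliary levels yields rings $R_\infty$ and modules $M_\infty$ of matched dimensions; the necessary commutative-algebra input --- $\mathcal{O}$-flatness, equidimensionality and Cohen--Macaulayness of the local potentially crystalline deformation rings at $v \mid p$, together with generic formal smoothness on each component (all due to Kisin) --- forces $M_\infty$ to be a faithful $R_\infty$-module. Specializing back then shows that $\mathcal{R}_F$ is a finitely generated $\mathcal{O}$-module whose support coincides with that of the Hecke module.

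The conceptual device making the prescribed local conditions compatible with automorphy is \emph{potential diagonalizability}: automorphy propagates along connected components of the generic fibre of the local potentially crystalline deformation ring whenever such a component contains a potentially diagonalizable point. The hypothesis that each $\rho_v$ for $v\mid p$ is potentially diagonalizable (with distinct Hodge--Tate numbers at each embedding) guarantees that its component supports an automorphic lift, so automorphy of $\bar r$ can be transferred precisely to the specified component. Since $\mathcal{R}_F$ is nonzero (it carries the residual point $\bar r$) and finite over $\mathcal{O}$, it admits a $\overline{\mathbb{Z}}_p$-valued point; the attached Galois representation is $r_{p,\iota}(\pi)$ for a regular algebraic cuspidal polarized $\pi$, and properties (1)--(6) can be read off directly from the local and global conditions built into the deformation problem.

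The main obstacles are threefold: first, verifying the delicate geometric properties of the local deformation rings at $p$-adic places (equidimensionality, Cohen--Macaulayness, and the structure of the irreducible components, in the style of Kisin); second, unifying the ordinary and potentially diagonalizable hypotheses into one patching --- in the ordinary case one typically invokes Hida-theoretic families to reduce to a potentially diagonalizable automorphic input; and third, correctly maintaining the polarized structure throughout, which requires working with the appropriate unitary similitude deformation functor rather than the naive $\mathrm{GL}_n$ one, and keeping track of the similitude character $\mu$ at every stage of the Taylor--Wiles argument.
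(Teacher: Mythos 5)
This theorem is not proved in the paper: it is quoted verbatim (with augmentations) from Barnet-Lamb--Gee--Geraghty--Taylor, cited as \cite{gee}, Theorem 4.4.1, so there is no ``paper's own proof'' to compare against. The only thing the paper adds is the remark immediately following the statement, namely that items (6) and (7) do not appear in the published conclusion of BLGGT's Theorem 4.4.1 but are established in the course of their proof, with a pointer to Remark 3.16 of \cite{blancodieulefait} for the justification.

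Given that, your sketch is a reasonable and essentially accurate high-level summary of the BLGGT strategy: the reduction to a definite unitary similitude group via Clozel--Harris--Labesse, the Taylor--Wiles--Kisin patching with the big-image hypothesis supplying auxiliary primes, the role of Kisin's local deformation rings (equidimensionality, $\mathcal{O}$-flatness, and generic smoothness on components), and above all the key structural insight that potential diagonalizability is what allows automorphy to propagate along connected components of the generic fibre of the local lifting ring. These are indeed the load-bearing ideas in BLGGT. However, none of this constitutes a proof you could substitute for the citation: the actual argument in \cite{gee} occupies a substantial portion of their paper and involves a number of delicate points your outline glosses over (most notably the detailed analysis of connectedness in the local deformation rings at $p$, the ``doubling'' or base-change tricks used to reach a common component, and the precise bookkeeping in the patching argument that yields the finiteness statement now recorded as item (7)). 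For the purposes of the present paper, the correct move --- and the one the authors take --- is simply to cite the theorem and to flag that the extra items (6)--(7) are extracted from the proof rather than the statement; you should do the same rather than attempt to reprove it.
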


\begin{rem}Items (6) and (7) of Thm. \ref{geetool} are not listed in the conclusion of this theorem in \cite{gee}. However these facts are proved as part of the proof therein. For details, cf. \cite{blancodieulefait} Rem. 3.16.
\end{rem}

Our aim is now to apply Thm. \ref{geetool} to a base change of $\bar{\rho}_{f,\frak{P}}$ to an imaginary CM field. To that end let us choose $S$ as in Section 3.2, namely, the union of the set of non-archimedean places pcorresponding to prime divisors of $\frak{N}$, to prime divisors of $p$ and the archimedean places of $F$. Let us also choose an imaginary quadratic extension $L/F$ such that all the archimedean places in $S$ are split. 
\begin{defn}
Let $\mathcal{R}_{global, L}$ be the global deformation ring introduced in Defn. \ref{globalring} but replacing $F$ by $L$.
\label{globalringF}
\end{defn}
To apply Thm. \ref{geetool} to $\bar{\rho}_{f,\frak{P}}|_{G_L}$, let us consider local lifts $\rho_\nu$ for each $\nu\in S$ as in Defn. \ref{defnlocal}: at a prime $\frak{p}$ over $p$ we choose the $\frak{p}$-supercuspidal lift from Lemma \ref{firstlift}. For a finite $\nu\in S$ over $\frak{N}$, let us choose a lift of $\overline{\rho}_{f,\frak{P}}|_{G_{L_\nu}}$ defining a point on the local ring $R_{\mathcal{O},\nu}^{\square,\mu}$. Let us now check the conditions of Thm. \ref{geetool} for our choice:
\begin{itemize}
\item Condition 1 holds as $\rho_{f,\frak{P}}$ is potentially diagonalizable, since $p>k+1$, so that the Hodge-Tate weights of $\rho_{f,\frak{P}}$ are in the Fontaine-Lafaille range. Hence, by base change its restriction to $G_L$ is also automorphic  (\cite{langlands}, assertion (A) of page 19) and also potentially diagonalizable, as this condition is obviously preserved for restriction base change. 
\item Condition 2 is also satisfied since $SL_2(\mathbb{F}_p)\subseteq\mathrm{Im}(\bar{\rho}_{f,\frak{P}})$, a condition preserved by restriction to $G_{L(\zeta_p)}$, since $p>\max\{k+1,6\}$ and $L/F$ is quadratic imaginary.  
\end{itemize}
Hence, we obtain an automorphic lift of $\bar{\rho}_{f,\frak{P}}|_{G_L}$ corresponding to a point in the deformation ring $\mathcal{R}_{global, L}$. This lift corresponds to an automorphic form $\pi$ of $GL_2(L)$ which is $\frak{p}$-supercuspidal for each prime $\frak{p}$ in $L$ over $p$, and potentially crystalline but it corresponds to a representation of $G_L$. Moreover, item (7) of Thm. \ref{geetool} grants that $\mathcal{R}_{global, L}$ is a finitely generated $\mathcal{O}$-module. Observe, furthermore, that since inertial types are constant on the irreducible components of these local deformation rings, due to items  (5) and (7), the modular representation produced by Thm. \ref{geetool} has the local inertial type at all primes in $S$ that we have specified.

We also need the following result, a proof of which can be adapted, mutatis mutandis, from the proof of \cite[Prop. 3.18]{blancodieulefait}:
\begin{prop} If  $\mathcal{R}_{global,L}$ is a finitely generated $\mathcal{O}$-module, then $\mathcal{R}_{global}$ is a finitely generated $\mathcal{O}$-module.
\label{fgeneration}
\end{prop}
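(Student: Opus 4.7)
The plan is to imitate the proof of \cite[Prop. 3.18]{blancodieulefait} essentially verbatim; the only structural difference between our global deformation ring $\mathcal{R}_{global}$ and the one treated there lies in the local deformation condition imposed at primes of $F$ above $p$ (supercuspidal Galois type $(2,\tau)$ here, versus potentially diagonalizable there), but that choice is invisible to the descent argument because all places of $F$ in $S$ split in $L/F$: the archimedean ones by the explicit choice of $L$, and the $p$-adic ones automatically since $p$ is totally split in $F$ and the auxiliary imaginary quadratic field is chosen with $p$ split in it.

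First I would construct the natural restriction map $\phi:\mathcal{R}_{global}\to \mathcal{R}_{global,L}$. Each local condition appearing in Definition \ref{defnlocal} is preserved under restriction of a $G_F$-deformation to $G_L$ at the corresponding split place: the inertial type $(2,\tau)$ at primes above $p$ is preserved because $G_{L_{\tilde{\frak{p}}}}\cong G_{F_{\frak{p}}}$ for $\tilde{\frak{p}}\mid\frak{p}$; the semistable or inertial-finite conditions at primes away from $p$ are clearly preserved; and the odd archimedean condition becomes vacuous because $L$ is totally imaginary. These compatibilities pass to the universal rings and give the continuous $\mathcal{O}$-algebra homomorphism $\phi$.

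Next, I would prove that $I:=\ker\phi$ is nilpotent. Any continuous $\mathcal{O}$-morphism $\psi:\mathcal{R}_{global}\to A$ into an Artinian quotient $A$ corresponds to a deformation $\rho_\psi: G_F\to \mathrm{GL}_2(A)$ satisfying the prescribed local conditions, and its restriction $\rho_\psi|_{G_L}$ satisfies the local conditions defining $\mathcal{R}_{global,L}$, yielding an $\mathcal{O}$-morphism $\psi':\mathcal{R}_{global,L}\to A$ with $\psi=\psi'\circ \phi$. Hence every $x\in I$ is annihilated by every such $\psi$, so $x$ lies in the intersection of the kernels of all maps of $\mathcal{R}_{global}$ into Artinian quotients. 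Since $\mathcal{R}_{global}$ is Noetherian local and complete, this intersection coincides with the nilradical, proving the claim.

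Finally, with $I=\ker\phi$ nilpotent, the induced injection $\mathcal{R}_{global}/I\hookrightarrow \mathcal{R}_{global,L}$ realises the quotient as an $\mathcal{O}$-submodule of a finitely generated $\mathcal{O}$-module (by hypothesis), and Noetherianity of $\mathcal{O}$ forces $\mathcal{R}_{global}/I$ to be finite over $\mathcal{O}$. Since $I$ is nilpotent and $\mathcal{R}_{global}$ is Noetherian, the $I$-adic filtration of $I$ is finite with finitely generated $\mathcal{R}_{global}/I$-module graded pieces, so $I$ is itself finite as an $\mathcal{R}_{global}/I$-module, and therefore $\mathcal{R}_{global}$ is finite over $\mathcal{O}$. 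The main technical hurdle is the nilpotency of $\ker\phi$: one must exploit the prorepresentability of the deformation functor carefully enough to ensure that the family of Artinian points really separates elements of $\mathcal{R}_{global}$, which is precisely the step spelled out in \cite[Prop. 3.18]{blancodieulefait} and which transfers to the present supercuspidal setting without modification.
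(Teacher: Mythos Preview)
Your approach has the restriction map pointing in the wrong direction, and this is fatal to the argument. Restricting a $G_F$-deformation to $G_L$ defines a natural transformation of functors $\mathrm{Def}_F\to\mathrm{Def}_L$; by Yoneda this corresponds to an $\mathcal{O}$-algebra map
\[
\alpha:\ \mathcal{R}_{global,L}\longrightarrow \mathcal{R}_{global},
\]
not a map $\mathcal{R}_{global}\to\mathcal{R}_{global,L}$. Concretely, your own Step 2 exhibits this: from $\psi:\mathcal{R}_{global}\to A$ you produce $\psi':\mathcal{R}_{global,L}\to A$, and the universal relation is $\psi'=\psi\circ\alpha$, not $\psi=\psi'\circ\phi$. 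There is no natural $\phi:\mathcal{R}_{global}\to\mathcal{R}_{global,L}$ to take the kernel of, so the nilpotent-kernel and submodule arguments in Steps 2 and 3 collapse. (Even on its own terms your Step 2 is off: for a complete Noetherian local ring the intersection of the kernels of all maps to Artinian quotients is $\bigcap_n\mathfrak{m}^n=0$, not the nilradical.)

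The argument that actually transfers from \cite[Prop.~3.18]{blancodieulefait} runs via $\alpha$ in the correct direction: one regards $\mathcal{R}_{global}$ as an $\mathcal{R}_{global,L}$-algebra through $\alpha$ and shows it is module-finite. The key step is to prove that the closed fibre $\mathcal{R}_{global}\otimes_{\mathcal{R}_{global,L}}k$ is Artinian; this ring classifies $G_F$-deformations whose restriction to $G_L$ is the trivial deformation $\overline{\rho}_{f,\frak{P}}|_{G_L}$, and since $[L:F]=2$ such a lift is determined by the image of a single nontrivial coset representative, giving finiteness. Topological Nakayama then yields that $\mathcal{R}_{global}$ is finite over $\mathcal{R}_{global,L}$, hence over $\mathcal{O}$ once $\mathcal{R}_{global,L}$ is. Your observation that the local conditions at split places are insensitive to the supercuspidal-versus-potentially-diagonalizable choice is correct and is exactly why the descent step carries over ``mutatis mutandis''; you just need to run it with the map in the right direction.
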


Finally, we can prove our main result:

\begin{thm}There exists a Hilbert modular lift $\rho_{g, \frak{P'}}$ of $\bar{\rho}_{f,\frak{P}}$ where $g\in S_\textbf{2}(\frak{N}p^2)$ is supercuspidal at $\frak{p}$ for each prime $\frak{p}$ of $F$ over $p$.
\end{thm}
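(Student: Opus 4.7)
The plan is to combine the Krull dimension bound $\dim(\mathcal{R}_{global})\geq 1$ from Thm.~\ref{cotadimension} with the finiteness of $\mathcal{R}_{global}$ as an $\mathcal{O}$-module granted by Prop.~\ref{fgeneration}. Since $\mathcal{O}$ is a complete DVR in characteristic zero and $\mathcal{R}_{global}$ is a finitely generated $\mathcal{O}$-algebra of positive Krull dimension, it cannot be torsion over $\mathcal{O}$; hence $\mathcal{R}_{global}[1/p]\neq 0$ and, passing to a minimal prime of $\mathcal{R}_{global}$ avoiding $p$, we obtain an $\mathcal{O}$-algebra morphism $\psi:\mathcal{R}_{global}\to\overline{\mathbb{Z}}_p$.

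Pulling back the universal representation along $\psi$ yields a continuous lift $\rho_{g,\mathfrak{P}'}:G_F\to \mathrm{GL}_2(\overline{\mathbb{Z}}_p)$ of $\overline{\rho}_{f,\mathfrak{P}}$. By the way $\mathcal{R}_{global}$ has been cut out via Def.~\ref{defnlocal}, at every $\mathfrak{p}\mid p$ this lift lies on the irreducible component of $R^{\square,\mu}(\overline{\rho}_{\mathfrak{p}})_{crys}$ containing the specific supercuspidal lift $\rho$ constructed in Prop.~\ref{firstlift}; since the inertial type is constant on an irreducible component (Kisin), $\rho_{g,\mathfrak{P}'}|_{G_{\mathfrak{p}}}$ is potentially crystalline of type $(2,\tau)$, i.e.\ of parallel Hodge--Tate weight $\{0,1\}$ and $\mathfrak{p}$-supercuspidal with fixed determinant $\mu=\epsilon\omega$. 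At finite primes $\nu\nmid p$ in $S$ the local condition ensures at most the tame/semistable behaviour already present at level $\mathfrak{N}$, so the conductor at such primes is unchanged.

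Automorphy is now extracted via the imaginary CM auxiliary field $L$. The restriction $\rho_{g,\mathfrak{P}'}|_{G_L}$ corresponds to a $\overline{\mathbb{Z}}_p$-point of the base-changed ring $\mathcal{R}_{global,L}$ of Def.~\ref{globalringF}, and the hypotheses (polarizability, potential diagonalizability, big residual image after restriction to $G_{L(\zeta_p)}$, distinct Hodge--Tate weights at primes above $p$) have already been checked in the discussion preceding Prop.~\ref{fgeneration}. Applying Thm.~\ref{geetool} — in particular items (5), (6) and (7) — produces a regular algebraic cuspidal polarized automorphic representation $\pi_L$ of $\mathrm{GL}_2(\mathbb{A}_L)$ whose associated Galois representation $r_{p,\iota}(\pi_L)$ is isomorphic to $\rho_{g,\mathfrak{P}'}|_{G_L}$ and which has precisely the prescribed inertial types at every place of $S$.

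It remains to descend $\pi_L$ from $L$ to $F$. Since $L/F$ is a solvable (quadratic) extension, cyclic base change in the sense of Arthur--Clozel applies: the $G_F$-representation $\rho_{g,\mathfrak{P}'}$ has an automorphic avatar on $\mathrm{GL}_2(\mathbb{A}_F)$ whose base change to $L$ is $\pi_L$, and this avatar corresponds to a Hilbert cuspidal Hecke eigenform $g$ on $F$. The local parameters at each $\mathfrak{p}\mid p$ are supercuspidal with character of conductor $\mathfrak{p}^2$ (the minimal conductor compatible with the type $\tau$ of Prop.~\ref{firstlift}), while the parameters outside $p$ produce at most the level $\mathfrak{N}$; the determinant/central character $\mu$ translates into the Nebentypus $\epsilon$. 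Therefore $g\in S_{\mathbf{2}}(\mathfrak{N}p^2,\epsilon)$ and is $\mathfrak{p}$-supercuspidal at every $\mathfrak{p}\mid p$, proving Thm.~\ref{thm1}. I expect the most delicate point to be the descent step: one must check that the local supercuspidal types and conductors behave as claimed under the quadratic descent $L\to F$, so that the level bumps up by exactly $p^2$ at each prime above $p$ and not more.
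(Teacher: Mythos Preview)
Your overall architecture matches the paper's: use Thm.~\ref{cotadimension} together with the finiteness coming from Prop.~\ref{fgeneration} to produce a $\overline{\mathbb{Z}}_p$-point of $\mathcal{R}_{global}$, then prove automorphy after restriction to the CM field $L$ and descend. Your direct argument for the existence of a $\overline{\mathbb{Z}}_p$-point (a finite $\mathcal{O}$-module of Krull dimension $\geq 1$ cannot be $p$-torsion, hence $\mathcal{R}_{global}[1/p]\neq 0$) is a perfectly good substitute for the paper's appeal to \cite[Lem.~2]{bockle}; just be careful to say ``finite $\mathcal{O}$-module'' rather than ``finitely generated $\mathcal{O}$-algebra'', since the latter alone would not rule out $p$-torsion.

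There is, however, a genuine gap in your modularity step over $L$. Thm.~\ref{geetool} (BLGGT Thm.~4.4.1) does \emph{not} assert that an arbitrary $\overline{\mathbb{Z}}_p$-point of $\mathcal{R}_{global,L}$ is automorphic: items (5)--(7) only produce \emph{some} automorphic representation $\pi$ with $r_{p,\iota}(\pi)$ lying on the prescribed components, and tell you that $\mathcal{R}_{global,L}$ is a finite $\mathcal{O}$-module. They do not identify $r_{p,\iota}(\pi_L)$ with your specific $\rho_{g,\mathfrak{P}'}|_{G_L}$. To conclude that \emph{this particular} lift is automorphic you need a modularity lifting theorem, and this is exactly what the paper invokes: \cite[Thm.~4.2.1]{gee}. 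Once you replace your appeal to items (5)--(7) of Thm.~\ref{geetool} by an appeal to that lifting theorem (the hypotheses having been verified in the discussion before Prop.~\ref{fgeneration}), the rest of your argument goes through. Your worry about the descent step is in fact harmless: $\rho_{g,\mathfrak{P}'}$ is already a $G_F$-representation, so once solvable base change (\cite[Thm.~6.2]{clozel}) gives you modularity over $F$, the local types, conductor and Nebentypus are read off directly from $\rho_{g,\mathfrak{P}'}$ itself, not from any comparison with $\pi_L$.
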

\begin{proof} We apply Thm. \ref{geetool} to $\bar{\rho}_{f,\frak{P}}|_{G_L}$ and we see from item (7) that $\mathcal{R}_{global,L}$ is a finitely generated $\mathcal{O}$-module and hence, by Prop. \ref{fgeneration} it follows that $\mathcal{R}_{global}$ is so.

From \cite[Lem. 2]{bockle} and Thm. \ref{cotadimension} it follows that $\mathcal{R}_{global}$ is a finite flat and complete intersections $\mathcal{O}$-module. Hence it follows that it contains a point $\mathcal{R}_{global}\to\bar{\Z}_p\to 0$. Since the restriction to $G_L$ of this lift is modular due to \cite[Thm. 4.2.1]{gee}, the lift itself is modular by solvable base change (see \cite[Thm. 6.2]{clozel} with $n=2$) and this finished the proof.
\end{proof}

Starting with a classical $h\in S_k(\Gamma_0(N))$ without CM, a prime $p>\max\{k+1,6\}$ such that $p\nmid N$, a prime $\frak{P}$ of $\mathcal{O}_h$ above $p$ such that the residual representation $\overline{\rho}_{h,\frak{P}}$ has large image, as a particular case of the previous result we have Thm. \ref{thm2}.

\section{Corrigenda to ``automorphy of $Sym^5(GL(2))$ and base change''}
In the paper \cite{dieulefait} there is a mistake in the process of introducing the micro good dihedral prime (MGD prime in what follows) $p=43$, which was noticed by Ariel Pacetti. The erroneous claim is the following: when a newform $f$ is principal series locally at a prime $p$, with ramification given by a character of conductor $p$ and prime order $t$, and we consider a residual mod $t$ Galois representation attached to $f$, the claim that this residual representation is either unramified or has unipotent ramification at $p$ is correct, but the claim that in the first case it is always possible to construct a lift with Steinberg ramification at $p$ is wrong (the mistake was caused by a miscalculation involving the unramified part of the characters describing the local at $p$ behaviour of the Galois representation at the prime $p$ in the case of a representation which is assumed to be in the principal series case). \\

To correct this mistake, we have to device an alternative way of introducing the MGD prime $43$ to the level. We note that the erroneous congruence is used in the paper not only in Step 4 of the safe chain, which is the step in which the MGD prime is introduced to the level, but also in Steps 3 and 7. Fortunately, Step 3 will now be fully replaced by another preparatory step, and the use of the wrong congruence in Step 7 is not essential, we will explain how it can be removed with just a minor change in this step. So what remains to be done is the following: \\

\begin{itemize}
\item[a)] Correct Step 7,
\item[b)] replace Step 3 by Step 3NEW, which is a preliminary step to the next one,
\item[c)] replace Step 4 by Step 4NEW, which explains the new method of introducing the MGD prime $43$ to the level. As we will see, this new method is precisely an application of the main theorem of this paper, i.e., we will use supercuspidal level raising mod $p$ at $p$.
\end{itemize}

 a) In Step 7 (see \cite{dieulefait}. Section 3.7) the wrong congruence is used only once, when playing with the pair of Sophie Germain primes $23$ and $47$, more precisely, when reducing mod $23$ a representation with ramification at $47$ given by a character of order $23$. If the residual mod $23$ representation ramifies at $47$ it must have unipotent ramification and the proof proceeds as it is. If it is unramified at $47$, we can argue as follows. First, we apply Ribet's level lowering to remove the prime $47$ from the level, obtaining a lift of the residual representation corresponding to a newform of level $43^2$ and weight $2$. Then, we apply a standard weight changing argument (multiplication by the Hasse invariant) to obtain a congruence mod $47$ between this modular form and a newform of level $43^2$ and weight $48$. From here the proof proceeds as in the other case, i.e., all arguments that follow apply to both cases. The only thing that requires extra explanation is the following: for the applications to the main results of Langlands functoriality obtained in \cite{dieulefait}, we need to work in each congruence that appears in the chain with representations that are either both ordinary or both potentially diagonalizable, so we need to explain why this is so in the new congruences that we have introduced. This is known for the case of the level lowering in characteristic $23$ because both representations are Barsotti-Tate, therefore potentially diagonalizable, so let us focus on the other congruence, the one where we have worked modulo $47$ and we have changed the weight. If the residual mod $47$ representation locally at $47$ is described by powers of the cyclotomic character, which corresponds to case (a) of Theorem \ref{arranque} with $r=1$, then we are in an ordinary case and the weight changing congruence amounts to moving in a Hida family, and we see that both representations are ordinary and crystalline in this case, thus both are potentially diagonalizable. \\
 In the complementary case, which corresponds to case (b) of Theorem \ref{arranque} with $k=2$, we can also see that both representations in the weight changing congruence are potentially diagonalizable: one of them is so because it is Barsotti-Tate (it is attached to a weight $2$ newform and the prime $47$ is not in its level), and for the other one we rely on Prop. 3.13 in \cite{ki1} where it is proved that the universal ring of local deformations in the case of crystalline deformations of weight $p+1 = 48$ is a domain, thus it is irreducible, plus the fact that can easily be checked (and is a particular case of Lemma 4.1.19 of \cite{blgg}) that in this case (case (b) of Theorem \ref{arranque} with $k=2$) we can construct a local lift of Hodge-Tate weights $0$ and $47$ which is induced from a crystalline character of the unramified quadratic extension of $\mathbb{Q}_p$. Thus, since the ring is irreducible and it contains a point given by the induction of a crystalline character, we deduce that all the points in this ring are potentially diagonalizable, and in particular the representation attached to the newform of weight $48$ that we consider in our congruence is potentially diagonalizable.

 Finally, let us mention that for the applications in \cite{dieulefait} it is also required that the residual image in each congruence is sufficiently large, so in principle we should check this in the new congruences introduced. However, this is checked in the paper in the first case (the case where the reduction mod $23$ of the newform has unipotent ramification at $47$), both for the congruence in characteristic $23$ and for the congruence in characteristic $47$, and it is easy to see that the same arguments work and give the same conclusion in the new cases we are considering. 

 b) Step 3NEW: This step replaces Step 3 in the paper (see \cite{dieulefait}, Section 3.3). Thus it begins with a newform of level $q^2$, good-dihedral at $q$, and with even weight $2 < k \leq 14$. Recall that, as explained in the paper, the good-dihedral prime $q$ ensures that, as long as we work exclusively in small residual characteristics (smaller than a prefixed bound), all residual images are going to be sufficiently large as required for the applications to the two main results of the paper. This step will be preparatory for step 4NEW, and what we need to do (the reasons for this will become clear in the next step) is to reduce to the case of weight $16$. Luckily, this is exactly what is done in step 7, also with an input newform having its weight in the  same range. So what we do in step 3NEW is exactly what is done in step 7 (of course, incorporating in step 7 the corrections explained above): we perform exactly the same set of congruences and we end up with a form of level $q^2$, good-dihedral at $q$, and of weight $16$. Notice that the only difference between step 3NEW and step 7 is that the input and output forms considered have different levels, the level being $q^2$ in the case of step 3NEW and $43^2$ in the case of step 7, but this does not change any of the arguments, it only simplifies the determination of the residual images because, as we have already remarked, having a good-dihedral prime $q$ in the level ensures sufficiently large residual images automatically (since in this step all congruences are in small characteristics).

 c) Step 4NEW: This replaces step 4 in the paper (see 
 \cite{dieulefait}, Section 3.4, we remark that the output of this step 
 will be the same as it was in step 4, so the chain can continue in step 
 5 in the paper). We will apply Thm. \ref{thm2} to introduce the MGD 
 prime $43$ to the level using a single congruence modulo $43$, doing 
 supercuspidal level raising. We start this step with the output of step
 3NEW, i.e., with a newform of level $q^2$, good-dihedral at $q$, and
 weight $16$. We consider a residual representation
 in characteristic $43$ attached to this newform. Observe that we are taking $p=43$ and we have $k=16$, $N= q^2$ thus the conditions $p > \max\{k+1, 6 \}$, $p \nmid N$ are satisfied. As explained in \cite{dieulefait}, the good-dihedral 
 prime $q$ in the level ensures that the image of this residual 
 representation is large enough as required  for the use of this congruence as part of the safe chain of congruences (i.e, for the application to the main theorems in \cite{dieulefait}). This also implies that the residual image satisfies the condition required to apply Thm. \ref{thm2}, so we apply this result and obtain a modular weight $2$ lift of this residual representation corresponding to a newform of level $43^2 \cdot q^2$ supercuspidal at $43$. More specifically, it follows from Proposition \ref{firstliftclassical} that the $43$-adic Galois representation just constructed is of type $(2, \tau)$ with $\tau \cong \omega_2^m \oplus \omega_2^{43 \cdot m} $ with $m = k + (p+1)(p-2) = 16 + 44 \cdot 41 = 1820$. Observe that the character $\omega_2^m$ has order $66$. \\ 
 At this point we will twist the modular form that we have obtained by a character of the form $\omega^a$ in order to reduce to a case where the character corresponding to the supercuspidal local type has order $11$. As explained in \cite{dieulefait}, twisting is also a valid move in the construction of the safe chain of congruences, and it can be freely applied whenever needed (this is due to the fact that it preserves automorphy and also the local properties that are required to apply Automorphy Lifting Theorems). It is an exercise in elementary arithmetic to check that by twisting by $\omega^a$ with $a = 35$ (or any other exponent congruent to $35$ modulo $42$) we obtain another weight $2$ newform supercuspidal at $43$ whose local type $\omega_2^{m + 44 \cdot a}
 \oplus \omega_2^{43 \cdot (m + 44 \cdot a)}$ corresponds to a character of order $11$. This finishes the procedure of introducing the MGD prime $43$ to the level, having obtained a weight $2$ newform with supercuspidal ramification at $43$ given by a character of order $11$.
 For the application to the main results in \cite{dieulefait}, it is important to stress that in the congruence that we have performed in this step both representations are potentially diagonalizable, one of them because it is in the Fontaine-Laffaille case, and the other one because it is a potentially Barsotti-Tate representation.

\end{document}